\begin{document}
\title[\hfil Impulsive hematopoiesis models with delays]
{On the existence and exponential attractivity of a unique positive almost periodic solution 
to an impulsive hematopoiesis model with delays}

\author[T.T. Anh, T.V. Nhung \& L.V. Hien]
{Trinh Tuan Anh, Tran Van Nhung  and  Le Van Hien}  

\address{Trinh Tuan Anh \newline
Department of Mathematics, Hanoi National University of Education\\
136 Xuan Thuy Road, Hanoi, Vietnam}
\email{anhtt@hnue.edu.vn}

\address{Tran Van Nhung \newline
Vietnam State Council for Professor Promotion, No. 1, Dai Co Viet Road, Hanoi, Vietnam}
\email{tvnhung@moet.edu.vn}

\address{Le Van Hien\newline
Department of Mathematics, Hanoi National University of Education\\
136 Xuan Thuy Road, Hanoi, Vietnam}
\email{hienlv@hnue.edu.vn}

\thanks{Accepted for publication in AMV}
%\subjclass[2010]{34A37,  34K25, 34K45, 92B05}
\maketitle

\begin{abstract}
In this paper, a generalized model of hematopoiesis with delays and impulses is considered.
By employing the contraction mapping principle and a novel type of impulsive delay inequality, 
we prove the existence of a unique positive almost periodic solution of the model.
It is also proved that, under the proposed conditions in this paper, the unique positive almost periodic
solution is globally exponentially attractive. A numerical example is given to illustrate
the effectiveness of the obtained results.
\end{abstract}
\keywords{Hematopoiesis model; almost periodic solution; impulsive systems.}

\numberwithin{equation}{section}
\newtheorem{theorem}{Theorem}[section]
\newtheorem{corollary}[theorem]{Corollary}
\newtheorem{lemma}{Lemma}[section]

\theoremstyle{definition}
\newtheorem{definition}{Definition}[section]
\newtheorem{remark}{Remark}[section]
\newtheorem{example}{Example}[section]
\allowdisplaybreaks

\section{Introduction}

The nonlinear delay differential equation
\begin{equation}\label{eq1}
\dot x(t)=-ax(t)+\frac{b}{1+x^n(t-\tau)},\quad n>0,
\end{equation}
where $a,b,\tau$ are positive constants, proposed by Mackey and Glass \cite{MG}, 
has been used as an appropriate model for the dynamics of hematopoiesis 
(blood cells production) \cite{BBI2,GL,MG}. 
In medical terms, $x(t)$ denotes the density of mature cells in blood circulation 
at time $t$ and $\tau$ is the time delay between the production of immature 
cells in the bone marrow and their maturation for release in circulating bloodstream. 

As we may know, the periodic or almost periodic phenomena are popular in various 
natural problems of real world applications \cite{A,BBI2,F,HLSC,LZ,LD,Sm,WLX,WZh}. 
In comparing with periodicity, almost periodicity is more 
frequent in nature and much more complicated in studying for such model \cite{Stamov,WZ}. 
On the other hand, many dynamical systems describe the real phenomena depend on the history 
as well as undergo abrupt changes in their states. This kind of models are best described by impulsive 
delay differential equations \cite{BS,SP,Stamov}. A great deal of effort from researchers has been devoted to study the 
existence and asymptotic behavior of almost periodic solutions of \eqref{eq1} and its generalizations due to 
their extensively realistic significance. We refer the reader to \cite{HW,Liu,GYZ,WZ,WLZ,ZYW} and the references therein. 
Particularly, in \cite{WZ}, Wang and Zhang investigated the existence, 
nonexistence and uniqueness of positive almost periodic solution of the following model
\begin{equation}\label{eq2}
\dot x(t)=-a(t)x(t)+\frac{b(t)x(t-\tau(t))}{1+x^n(t-\tau(t))},\quad n>1,
\end{equation}
by using a new fixed point theorem in cone. Very recently, using a fixed point theorem 
for contraction mapping combining with the Lyapunov functional method, 
Zhang et al. \cite{ZYW} obtained sufficient conditions for the 
existence and exponential stability of a positive almost periodic 
solution to a generalized model of \eqref{eq1}
\begin{equation}\label{eq3}
\dot x(t)=-a(t)x(t)+\sum_{i=1}^m\frac{b_i(t)}{1+x^n(t-\tau_i(t))},\quad n>0.
\end{equation}
By employing a novel argument, a delay-independent criteria was established
in \cite{Liu} ensuring the existence, uniqueness, and global exponential stability of positive almost periodic solutions of
a non-autonomous delayed model of hematopoiesis with almost periodic coefficients and delays.
In \cite{ANS}, Alzabut et al. considered the following model of hematopoiesis with impulses
\begin{equation}\label{eq4}
\begin{aligned}
&\dot x(t)=-a(t)x(t)+\frac{b(t)}{1+x^n(t-\tau)},\; t\geq 0,\; t\neq t_k,\\
&\Delta x(t_k)=\gamma_kx\left(t_k^-\right)+\delta_k,\; k\in \mathbb{N},
\end{aligned}
\end{equation}
where $t_k$ represents the instant at which the density suffers an increment 
of $\delta _k$ unit and $\Delta x(t_k)=x\left(t_k^+\right)-x\left(t^-_k\right)$. 
The density of mature cells in blood circulation decreases at prescribed 
instant $t _k$ by some medication and it is proportional to the density 
at that time $t ^-_k$. By employing the contraction mapping principle 
and applying Gronwall-Bellman's inequality, sufficient conditions which 
guarantee the existence and exponential stability of a positive 
almost periodic solution of system \eqref{eq4} were given in \cite{ANS} as follows.

\begin{theorem}[\cite{ANS}]\label{thm1}
 Assume that
\begin{itemize}
\item[\rm(C1)] The function $a\in C\left(\mathbb{R}^+,\mathbb{R}^+\right)$  
is almost periodic in the sense of Bohr and there exists a 
positive constant $\mu$ such that $a(t)\geq \mu$.
\item[\rm(C2)] The sequence $\{\gamma_k\}$ is almost periodic and 
$-1\leq \gamma_k \leq 0,\ k\in \mathbb{N}$.
\item[\rm(C3)] The sequences $\{t_k^p\}$ are uniformly almost 
periodic and there exists a positive constant $\eta $ such that $\inf_{k\in \mathbb{N}}t^1_k=\eta$, 
where $0<\sigma\leq t_k<t_{k+1}, \forall k\in\mathbb{N}$, $\lim_{k\to \infty}t_k=\infty$ 
and $t_k^p=t_{k+p}-t_k, k,p\in \mathbb{N}$.
\item[\rm(C4)] The function $b\in C\left(\mathbb{R}^+,\mathbb{R}^+\right)$ is almost 
periodic in the sense of Bohr, $b(0)=0$, and there exists a positive 
constant $\nu$ such that $\sup_{t\in \mathbb{R}^+}|b(t)|< \nu$.
\item[\rm(C5)] The sequence $\{\delta_k\}$ is almost periodic and 
there exists a constant $\delta>0$ such that $\sup_{k\in\mathbb{N}}|\delta_k|<\delta$.
\end{itemize}

 If  $\nu <\mu$, then equation \eqref{eq4} has a unique positive almost periodic solution.
\end{theorem}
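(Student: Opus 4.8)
The plan is to rewrite the impulsive problem \eqref{eq4} as a fixed-point equation for an integral operator acting on a space of piecewise continuous almost periodic functions on $\mathbb{R}$, and then to invoke the Banach contraction principle. The first step is to write down the Cauchy (Green's) function of the linear impulsive part $\dot u=-a(t)u$, $u(t_k^+)=(1+\gamma_k)u(t_k^-)$, namely
\[
W(t,s)=\exp\left(-\int_s^t a(r)\,dr\right)\prod_{s<t_k\le t}(1+\gamma_k),\qquad t\ge s .
\]
From (C1) and (C2) one gets the two-sided bound $0\le W(t,s)\le e^{-\mu(t-s)}$, and from (C3) the impulse instants have a uniform positive separation $\eta$, so only finitely many $t_k$ lie in any unit interval; hence $\sum_{t_k<t}W(t,t_k)$ converges and is bounded uniformly in $t$ by some constant $M$. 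A function $x$ bounded on $\mathbb{R}$ then solves \eqref{eq4} if and only if it is a fixed point of
\[
(\Phi x)(t):=\int_{-\infty}^{t}W(t,s)\,\frac{b(s)}{1+x^{\,n}(s-\tau)}\,ds+\sum_{t_k<t}W(t,t_k)\,\delta_k ,
\]
which is just the variation-of-constants representation of a bounded solution.

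The second step is to fix the functional setting. Let $PC$ be the Banach space, equipped with the supremum norm, of functions $\mathbb{R}\to\mathbb{R}$ that are continuous except for jumps at the $t_k$ and are almost periodic in the sense adapted to impulsive systems. Using the almost periodicity hypotheses (C1)--(C5) together with the composition and convolution theorems for (piecewise) almost periodic functions and for uniformly almost periodic sequences, one checks that $\Phi(PC)\subseteq PC$: the map $t\mapsto b(t)/(1+x^n(t-\tau))$ is almost periodic whenever $x$ is, by (C4); the kernel $W$ inherits almost periodicity from $a$, $\{\gamma_k\}$ and $\{t_k^p\}$; and the exponential factor $e^{-\mu(t-s)}$ together with the separation bound $\eta$ makes both the improper integral and the series over the $t_k$ converge uniformly, so that almost periodicity passes to $\Phi x$.

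The third step is to locate a closed $\Phi$-invariant set on which positivity is ensured and on which $\Phi$ contracts. Since $b\ge0$ and $0<1/(1+u^n)\le1$, the integral term of $\Phi x$ lies in $[0,\nu/\mu)$; combining this with the bound $\delta M$ on the impulsive term one picks constants $0<\alpha<\beta$ and sets $\Omega=\{\,x\in PC:\alpha\le x(t)\le\beta\ \text{for all }t\in\mathbb{R}\,\}$, and verifies $\Phi(\Omega)\subseteq\Omega$. For $x,y\in\Omega$ one has
\[
|(\Phi x)(t)-(\Phi y)(t)|\le\int_{-\infty}^{t}W(t,s)\,|b(s)|\,
\left|\frac{1}{1+x^n(s-\tau)}-\frac{1}{1+y^n(s-\tau)}\right|ds
\le\frac{L\nu}{\mu}\,\|x-y\|_{\infty},
\]
where $L$ is a Lipschitz constant for $u\mapsto 1/(1+u^n)$ on $[\alpha,\beta]$ and where $\int_{-\infty}^t W(t,s)\,ds\le1/\mu$ and $\sup_t|b(t)|<\nu$ were used. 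Under $\nu<\mu$ (with $L\le1$ on that range) the constant $L\nu/\mu$ is $<1$, so $\Phi$ is a contraction on $\Omega$ and Banach's theorem gives a unique fixed point $x^\ast\in\Omega$; this $x^\ast$ is the desired unique positive almost periodic solution of \eqref{eq4}.

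I expect the crux of the argument to be producing a genuinely $\Phi$-invariant \emph{positive} order interval while simultaneously keeping the contraction constant strictly below $1$ out of the single inequality $\nu<\mu$: this requires carefully balancing the lower bound $\alpha$, which is threatened by the possibly negative impulsive term $\sum_{t_k<t}W(t,t_k)\delta_k$, against the size of the Lipschitz constant of $u\mapsto1/(1+u^n)$ on $[\alpha,\beta]$ (here a Gronwall--Bellman a priori estimate on bounded solutions can help pin down where $x^\ast$ must live). A secondary technical point, where the uniform almost periodicity of $\{t_k^p\}$ in (C3) is essential, is verifying that the infinite product defining $W$ is almost periodic and that the series $\sum_{t_k<t}W(t,t_k)\delta_k$ converges uniformly in $t$, so that $\Phi$ really does act on the space $PC$.
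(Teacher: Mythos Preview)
The statement you are trying to prove is actually \emph{false}, and the paper does not prove it but rather \emph{disproves} it via the counterexample in Example~\ref{exam1}. Equation~\eqref{eq5}, namely $\dot x(t)=-x(t)$ for $t\neq k$ with $\Delta x(k)=-1$, satisfies all of (C1)--(C5) (take $a\equiv1$, $b\equiv0$, $\gamma_k=0$, $\delta_k=-1$, $t_k=k$), yet has no positive almost periodic solution: any solution satisfies $x^*(n)=e^{-n}x^*(0)-\sum_{k=1}^{n}e^{-(n-k)}\to -e^{-1}/(1-e^{-1})<0$.

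The gap in your argument is exactly where you yourself flagged the ``crux'': you cannot in general pick constants $0<\alpha<\beta$ with $\Phi(\Omega)\subseteq\Omega$. Condition (C5) only bounds $|\delta_k|$ and does \emph{not} require $\delta_k\ge0$, so the impulsive term $\sum_{t_k<t}W(t,t_k)\delta_k$ may be strictly negative and bounded away from zero. In the counterexample above, $b\equiv0$, so the integral part of $\Phi x$ vanishes identically and
\[
(\Phi x)(t)=-\sum_{k<t}e^{-(t-k)}<0\qquad\text{for every }x,
\]
hence no positive $\Phi$-invariant interval exists at all. No Gronwall--Bellman a~priori estimate can rescue this, because there is genuinely no positive solution to locate. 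This is precisely why the paper replaces (C2) by assumption~(A6) and introduces the explicit lower bound $M_2$ in~\eqref{eq15}, requiring $M_2>0$ as a hypothesis in Theorem~\ref{thm3}.

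A secondary, independent gap is your parenthetical ``with $L\le1$ on that range'': the global Lipschitz constant of $u\mapsto1/(1+u^n)$ on $[0,\infty)$ behaves like $n/4$ for large $n$, so $\nu<\mu$ alone does not yield $L\nu/\mu<1$ either.
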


Unfortunately, the above theorem is incorrect.
For this, let us consider the following example.

\begin{example}\label{exam1}
Consider the following equation
\begin{equation}\label{eq5}
 \dot x(t)= - x(t), \quad t\geq 0, t\neq k \in \mathbb{N},\quad \Delta x(k)=-1, \; k\in \mathbb{N}.
\end{equation}
 
Note that \eqref{eq5} is a special case of \eqref{eq4}.
Moreover, we can easily see that equation \eqref{eq5} satisfies conditions (C1)-(C5),
where $t_k=k, \gamma_k=0$ and $\delta_k=-1$.

Suppose that system \eqref{eq5} has a positive almost periodic solution $x^*(t)$. It is obvious that 
\begin{equation*}
x^*(t)=e^{-t}x^*(0)-\sum_{k\in\mathbb{N},\; k\leq t}e^{-(t-k)},\quad t>0.
\end{equation*}

For any positive integer $n$, we have
\begin{equation*}
0<x^*(n)=e^{-n}x^*(0)-e^{-1}\left(\frac{1-e^{-n}}{1-e^{-1}}\right)
\longrightarrow \frac{-e^{-1}}{1-e^{-1}}<0 \text{ as } n\to \infty
\end{equation*}
which yields a contradiction. This shows that \eqref{eq5} has no positive almost 
periodic solution. 
Thus, Theorem \ref{thm1} is incorrect, and Theorem 3.2 in \cite{ANS} is also incorrect.
\end{example}

Motivated by the aforementioned discussions, in this paper we consider a generalized 
model of hematopoiesis with delays, harvesting terms \cite{BBI1,LM,Long} and impulses of the form
\begin{equation}\label{eq6}
\begin{split}
&\begin{aligned}\dot x(t)=-a(t)x(t)&+\sum_{i=1}^m\bigg[\frac{b_i(t)}{1+x^{\alpha_i}(t-\tau_i(t))}\\
&+c_i(t)\int_0^{T}\frac{v_i(s)}{1+x^{\beta_i}(t-s)}ds-H_i(t,x(t-\sigma_i(t)))\bigg],\quad t\neq t_k,
\end{aligned}\\
&\Delta x(t_k)=x\left(t_k^+\right)-x\left(t^-_k\right)
=\gamma_kx\left(t_k^-\right)+\delta_k, \quad k\in \mathbb{Z},
\end{split}
\end{equation}
where $m$ is given positive integer, 
$a, b_i,c_i:\mathbb{R}\to \mathbb{R}, i\in\underline{m}:=\{1,2,\ldots,m\}$, 
are nonnegative functions; $H_i:\mathbb{R}\times \mathbb{R}\to \mathbb{R}_+, i\in\underline{m},$ 
are nonnegative functions represent harvesting terms; 
$\tau_i(t),\sigma _i(t)\geq 0, i\in\underline{m}$, are time delays; 
$\alpha _i, \beta_i, i\in\underline{m},$ are positive numbers and $T>0$ is a constant;  
$\gamma _k, \delta_k, k\in \mathbb{Z},$ are constants; 
$v_i(t), i\in\underline{m},$ are nonnegative integrable functions on $[0,T]$ with 
$\int_0^Tv_i(s)ds=1$; $\{t_k\}, k\in\mathbb{Z}$, is an increasing sequence 
involving the fixed impulsive points with 
$\lim_{k\to \pm\infty}t_k=\pm\infty$. 

The main goal of the present paper is to establish conditions for the 
existence of a unique positive almost periodic 
solution of  model \eqref{eq6}. It is also proved that, under the proposed
conditions, the unique positive almost periodic solution of \eqref{eq6} is
globally exponentially attractive. 

The rest of this paper is organized as follows. Section 2 introduces some notations, basic definitions
and technical lemmas. Main results on the existence and 
exponential attractivity of a unique positive almost periodic solution of \eqref{eq6}
are presented in section 3. An illustrative example is given in section 4. 
The paper ends with the conclusion and cited references.

\section{Preliminaries}

Let $\{t_k\}_{k\in \mathbb{Z}}$ be a fixed sequence of real numbers 
satisfying $t_k<t_{k+1}$ for all $k\in \mathbb{Z}$, 
$\lim_{k\to \pm\infty}t_k=\pm\infty$. Let $X$ be an interval of 
$\mathbb{R}$, denoted by $PLC(X,\mathbb{R})$ the space of 
all piecewise left continuous functions $\phi: X\to \mathbb{R}$ 
with points of discontinuity of the first kind at $t=t _k$, $k\in \mathbb{Z}$. 

The following notations will be used in this paper. 
For bounded functions $f:\mathbb{R}\to \mathbb{R}$, 
$F:\mathbb{R}\times \mathbb{R}_+\to \mathbb{R}$ and a bounded sequence $\{ z_k\}$, we set
\begin{align*} 
 &f_L=\inf_{t\in \mathbb{R}}f(t),\; f_M=\sup_{t\in \mathbb{R}}f(t),\\
&F_L=\inf_{(t,x)\in \mathbb{R}\times\mathbb{R}_+}F(t,x),\; 
F_M=\sup_{(t,x)\in \mathbb{R}\times \mathbb{R}_+}F(t,x),\\ 
&z_L=\inf_{k\in \mathbb{Z}}z_k,\; z_M=\sup_{k\in \mathbb{Z}}z_k.
\end{align*}

The following definitions are borrowed from \cite{SP}.

\begin{definition}[\cite{SP,Stamov}]\label{def1}
The set of sequences $\{t_k^p\}$, where $t_k^p=t_{k+p}-t_k, p, k\in \mathbb{Z}$,  
is said to be uniformly almost periodic if for any positive number 
$\epsilon$, there exists a relatively dense set of $\epsilon$-almost 
periods common for all sequences.
\end{definition}

\begin{definition}[\cite{LZ,SP}]\label{def2}
A function $\phi \in PLC(\mathbb{R},\mathbb{R})$ is said to be 
almost periodic if the following conditions hold
\begin{itemize}
\item[(i)]  The set of sequences $\{t_k^p\}$ is uniformly almost periodic.
\item[(ii)] For any $\epsilon>0$, there exists $\delta=\delta(\epsilon)>0$ such that, 
if $t,\bar t$ belong to the same interval of continuity of $\phi (t)$, 
$|t-\bar t|<\delta$, then $|\phi (t)-\phi (\bar t)|<\epsilon$.
\item[(iii)] For any $\epsilon>0$, there exists a relatively 
dense set $\Omega$ of $\epsilon$-almost periods such that, if $\omega \in \Omega$ 
then $|\phi (t+\omega )-\phi (t)|<\epsilon$ for all $t\in \mathbb{R}$, 
$k\in \mathbb{Z}$ satisfying $|t-t_k|>\epsilon$. 
\end{itemize}
\end{definition}

For equation \eqref{eq6}, we introduce the following assumptions
\begin{itemize}
\item[(A1)]  The function $a(t)$ is almost periodic in the sense of Bohr and $a_L>0$.
\item[(A2)] The functions $ b_i(t),c_i(t), i\in\underline{m}$, 
are nonnegative and almost periodic in the sense of Bohr.
\item[(A3)] The function $H_i(t,x), i\in\underline{m}$, 
are bounded nonnegative and almost periodic in the sense of Bohr 
in $t\in \mathbb{R}$ uniformly in $x\in \mathbb{R}_+$ and 
there exist positive constants $L_i$ such that
\[
|H_i(t,x)-H_i(t,y)|\leq L_i|x-y|, \; \forall (t,x),(t,y)\in \mathbb{R}\times \mathbb{R}_+.
\]
\item[(A4)] The functions $\tau_i (t), \sigma_i(t), i\in\underline{m}$, 
are almost periodic in the sense of Bohr, $\dot\tau_i(t), \dot\sigma_i(t)$ are bounded,  
$\inf_{t\in \mathbb{R}}\left(1-\dot \tau_i(t)\right)>0$, 
$\inf_{t\in \mathbb{R}}\left(1-\dot \sigma_i(t)\right)>0$.
\item[(A5)] The sequence $\{\delta_k\}$ is almost periodic.
\item[(A6)] The sequence $\{\gamma_k\}$ is almost periodic satisfying
\[
\gamma_L >-1, \quad \Gamma_M=\sup_{p,q\in\mathbb{Z}, p\geq q}\Gamma(q,p)<\infty,\quad 
\Gamma_L=\inf_{p,q\in \mathbb{Z}, p\geq q}\Gamma(q,p)>0,
\]
where $\Gamma (q,p)=\prod\limits_{i=q}^p(1+\gamma_i), p\geq q$.
\item[(A7)] The set of sequences $\{t_k^p\}$ is uniformly 
almost periodic, $\eta =\inf_{k\in \mathbb{Z}}t_k^1>0$.
\end{itemize}

\begin{remark}\label{rm1}
It should be noted that model \eqref{eq6} includes \eqref{eq4} as a special case. For that model, 
assumptions  (A3), (A4) obviously be removed. 
Furthermore, we make assumption (A6) in order to correct condition (C2) in \cite{ANS}.
\end{remark}

The following lemmas will be used in the proof of our main results.

\begin{lemma}[\cite{SP}]\label{lem1}
Let Assumption {\rm(A7)} holds. Assume that functions  
$g_i(t), i\in\underline{m}$, are almost periodic in the sense of Bohr, 
a function $\phi (t)$ and sequences  $\{\delta_k\}$, $\{\gamma_k\}$ 
are almost periodic. Then for any $\epsilon>0$, there exist 
$\epsilon_1\in(0,\epsilon)$, relatively 
dense sets $\Omega\subset\mathbb{R}$, $\mathcal P\subset\mathbb{Z}$ such that
\begin{itemize}
\item[\rm(a1)] $|\phi (t+\omega)-\phi (t)|<\epsilon, t\in \mathbb{R}, |t-t_k|>\epsilon, k\in\mathbb{Z}$;
\item[\rm(a2)] $|g_i(t+\omega)-g_i(t)|<\epsilon,  t\in \mathbb{R},  i\in\underline{m}$;
\item[\rm(a3)] $|\gamma _{k+p}-\gamma _k|< \epsilon, 
|\delta _{k+p}-\delta _k|< \epsilon, |t^p_k-\omega|<\epsilon _1, 
\omega \in \Omega, p\in {\mathcal P}, k\in \mathbb{Z}.$
\end{itemize}
\end{lemma}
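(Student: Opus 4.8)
The plan is to prove Lemma~\ref{lem1} by reducing it to the simultaneous approximation of finitely many "independent" almost periodic objects, a standard but slightly delicate construction in the theory of almost periodic functions with impulses. First I would recall that, for a single Bohr almost periodic function $g$, the set $T(g,\epsilon)$ of $\epsilon$-almost periods is relatively dense in $\mathbb{R}$; similarly, for an almost periodic sequence $\{z_k\}$ the set of $\epsilon$-almost periods is relatively dense in $\mathbb{Z}$; and by Assumption~(A7) the set of sequences $\{t_k^p\}$ is uniformly almost periodic, so by Definition~\ref{def1} the common $\epsilon$-almost periods of all the sequences $\{t_k^p\}$ form a relatively dense subset of $\mathbb{Z}$. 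The classical intersection lemma (see \cite{SP,Stamov}) states that a finite intersection of relatively dense sets of the appropriate type is again relatively dense, provided one works with a slightly larger tolerance; this is exactly the mechanism that will produce $\epsilon_1\in(0,\epsilon)$ together with the pair of relatively dense sets $\Omega$ and $\mathcal{P}$.

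The key steps, in order, are as follows. Step 1: choose $\epsilon_1\in(0,\epsilon)$ small; the precise constraint on $\epsilon_1$ will emerge from Step 4, where the discontinuity points of $\phi$ must be avoided. Step 2: apply the definition of uniform almost periodicity to $\{t_k^p\}$ to obtain a relatively dense set $\mathcal{P}_0\subset\mathbb{Z}$ of common $\epsilon_1$-almost periods, so that $|t_k^p - t_k^{p'}|$-type estimates hold; more precisely one gets that for $p\in\mathcal{P}_0$ there is a real number close to each $t_k^p$ uniformly in $k$, and one extracts the number $\omega$ so that $|t_k^p-\omega|<\epsilon_1$. Step 3: apply the scalar theory to $a,b_i,c_i,H_i(\cdot,x),\tau_i,\sigma_i$ and to $\phi$, and to the sequences $\{\gamma_k\},\{\delta_k\}$, getting relatively dense sets of $\epsilon$-almost periods in $\mathbb{R}$ and in $\mathbb{Z}$ respectively. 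Step 4: intersect all these sets. Here one uses the compatibility built into Definition~\ref{def2}(iii): because $\phi$ is only almost periodic in the impulsive sense, the inequality $|\phi(t+\omega)-\phi(t)|<\epsilon$ is only asserted for $t$ with $|t-t_k|>\epsilon$, so one has to be careful that the common period $\omega$ one finally selects still satisfies the sequence conditions $|t_k^p-\omega|<\epsilon_1$; this forces $\epsilon_1$ to be chosen after $\epsilon$ and strictly smaller than it, which is why the statement carries the auxiliary constant $\epsilon_1$. Step 5: read off (a1)--(a3) for any $\omega\in\Omega$ and $p\in\mathcal{P}$ in the resulting intersection.

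The main obstacle is Step~4, the intersection argument: relative density is not automatically preserved under intersection in general, and the right statement requires the classical theorem (Bochner-type / Bogolyubov-type lemma) that the intersection of finitely many relatively dense sets of $\epsilon$-almost periods of almost periodic functions is relatively dense for a suitably enlarged tolerance. The subtlety in the impulsive setting is coordinating the real-variable periods $\omega$ with the integer-variable periods $p$ through the linking condition $|t_k^p-\omega|<\epsilon_1$, which is precisely where uniform almost periodicity of $\{t_k^p\}$ (Assumption~(A7)) is indispensable. Since the paper explicitly attributes this lemma to \cite{SP}, I would present the proof as an invocation of that source together with the intersection lemma, spelling out only the bookkeeping: list the finitely many objects, apply the scalar results, take the common tolerance, intersect, and extract $\epsilon_1$, $\Omega$, $\mathcal{P}$. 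No genuinely new estimate is needed beyond the standard machinery, so the "proof" is chiefly a careful assembly rather than a computation.
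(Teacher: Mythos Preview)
The paper gives no proof of this lemma at all: it is stated with the attribution \cite{SP} and nothing further. Your proposal correctly recognizes this and sketches the standard construction one would find in \cite{SP} or \cite{Stamov}---namely, collecting the finitely many almost periodic objects, invoking the common-almost-period (intersection) lemma, and coordinating the real shifts $\omega$ with the integer shifts $p$ via the uniform almost periodicity of $\{t_k^p\}$. That outline is sound and is precisely the machinery behind the cited result, so there is nothing to correct; if anything, your write-up is more informative than the paper's bare citation.
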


\begin{lemma}\label{lem2} 
Let Assumption {\rm(A7)} holds. Assume that functions $f_i(t,x), i\in\underline{m}$, 
are almost periodic in $t\in \mathbb{R}$ uniformly in $x \in \mathbb{R}$ 
in the sense of Bohr, a function $\phi (t)$ and sequences  
$\{\delta_k\}$, $\{\gamma_k\}$ are almost periodic. 
Then for any compact set ${\mathcal M}\subset \mathbb{R}$ and positive number 
$\epsilon$, there exist $\epsilon_1\in(0,\epsilon)$, relatively dense sets 
$\Omega\subset\mathbb{R}$, $\mathcal P\subset\mathbb{Z}$ such that 
\begin{itemize}
\item[\rm(b1)] $|\phi (t+\omega)-\phi (t)|<\epsilon, t\in \mathbb{R}, |t-t_k|>\epsilon, \omega\in\Omega$;
\item[\rm(b2)] $|f_i(t+\omega,x)-f_i(t,x)|<\epsilon, t\in \mathbb{R},
x\in {\mathcal M}, \omega\in\Omega, i\in\underline{m}$;
\item[\rm(b3)] $|\gamma _{k+p}-\gamma _k|< \epsilon, 
|\delta _{k+p}-\delta _k|< \epsilon, |t^p_k-\omega|<\epsilon _1, 
\omega \in \Omega, p\in {\mathcal P}, k\in \mathbb{Z}.$
\end{itemize}
\end{lemma}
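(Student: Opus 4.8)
The plan is to reduce Lemma \ref{lem2} to Lemma \ref{lem1} by the standard device of bundling finitely many almost periodic objects into a single almost periodic object; the only genuinely new ingredient, compared with Lemma \ref{lem1}, is the handling of the parameter $x$, and this is disposed of precisely by restricting to the compact set $\mathcal{M}$.

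First I would record the elementary observation that if $f_i(t,x)$ is almost periodic in $t\in\mathbb{R}$ uniformly in $x\in\mathbb{R}$ in the sense of Bohr, then for every $\epsilon>0$ the set
\[
T_i(\epsilon)=\Big\{\omega\in\mathbb{R}:\ \sup_{t\in\mathbb{R},\,x\in\mathcal{M}}|f_i(t+\omega,x)-f_i(t,x)|<\epsilon\Big\}
\]
is relatively dense, since it contains the (already relatively dense) set obtained by taking the supremum over all $x\in\mathbb{R}$; restriction to $\mathcal{M}$ only enlarges it. In other words, as far as the variable $t$ is concerned, $f_i$ restricted to $\mathbb{R}\times\mathcal{M}$ behaves exactly like one of the scalar almost periodic functions $g_i$ appearing in Lemma \ref{lem1} — equivalently, the map $t\mapsto f_i(t,\cdot)\!\mid_{\mathcal{M}}$ is a bounded almost periodic function with values in $C(\mathcal{M})$, whose $\epsilon$-almost periods are exactly $T_i(\epsilon)$.

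Next I would form the single almost periodic object carrying all the continuous data, namely $F(t)=\big(\phi(t),\,f_1(t,\cdot)\!\mid_{\mathcal{M}},\dots,f_m(t,\cdot)\!\mid_{\mathcal{M}}\big)$: this is a piecewise left-continuous almost periodic function (in the sense of Definition \ref{def2}, read componentwise for the $C(\mathcal{M})^m$ part) with discontinuities only at the points $t_k$, and its $\epsilon$-almost periods away from the jump set are exactly the intersection of those of $\phi$ and of the $f_i$, this intersection still being relatively dense because $F$ itself is almost periodic (the usual vector-valued Bochner characterization, which is the very mechanism already used in the proof of Lemma \ref{lem1}). Keeping the discrete data $\{\gamma_k\}$, $\{\delta_k\}$ and, by (A7), the uniformly almost periodic set of sequences $\{t_k^p\}$, I would then run the argument of Lemma \ref{lem1} verbatim with $F$ in place of $(\phi,g_1,\dots,g_m)$: choose $\epsilon_1\in(0,\epsilon)$ and relatively dense sets $\Omega\subset\mathbb{R}$, $\mathcal{P}\subset\mathbb{Z}$ so that every $\omega\in\Omega$ is simultaneously an $\epsilon$-almost period of $F$ off the jump points — which yields (b1) and, reading off the $C(\mathcal{M})^m$ components, (b2) — and is $\epsilon_1$-close to some $t_k^p$ with $p\in\mathcal{P}$ an $\epsilon$-almost period of $\{\gamma_k\}$ and $\{\delta_k\}$, which yields (b3).

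The main point to be careful about — and the only place where a naive ``intersect all the relatively dense sets'' argument would break down — is the relative density of the common almost-period set, since intersections of relatively dense sets need not be relatively dense; one must instead argue through the almost periodicity of the bundled function $F$ together with the compatibility between its almost periods and those of the involved sequences furnished by (A7), which is exactly the Samoilenko--Perestyuk machinery underlying Lemma \ref{lem1}. Once each $f_i$ is seen to fit into that framework as a $C(\mathcal{M})$-valued almost periodic function of $t$ (the step that uses compactness of $\mathcal{M}$), no further difficulty arises and the proof of Lemma \ref{lem1} transfers word for word.
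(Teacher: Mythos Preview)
Your proposal is correct and is essentially the argument the paper has in mind: the paper's own proof consists of the single sentence ``the proof of this lemma is similar to the proof of Lemma 2.1 in \cite{Stamov} so let us omit it here,'' and what you have written is precisely the expected unpacking of that reference---bundle the parameter-dependent functions $f_i(\cdot,x)$ into Banach-space-valued almost periodic functions on the compact parameter set $\mathcal{M}$ and then invoke the same Samoilenko--Perestyuk synchronization machinery that underlies Lemma~\ref{lem1}. One small remark: under the hypothesis ``almost periodic in $t$ uniformly in $x\in\mathbb{R}$'' as you read it (almost periods uniform over \emph{all} $x$), compactness of $\mathcal{M}$ is not actually needed for the relative density of $T_i(\epsilon)$, only for the $C(\mathcal{M})$-valued reformulation; if instead one reads the hypothesis in the weaker (and more common) sense of uniformity on compacta, then compactness is genuinely used already at the first step---either way your argument goes through.
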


\begin{proof}  The proof of this lemma is similar to the
proof of Lemma 2.1 in \cite{Stamov} so let us omit it here.
\end{proof}

\begin{lemma}\label{lem3} For given $\epsilon>\epsilon_1>0$, 
real number $\omega$ and integers $k,p$ such that 
$|t^p_k-\omega|<\epsilon_1$, if $|t-t_i|>\epsilon$ for all 
$i\in \mathbb{Z}$ and $t_{k-1}<t<t_k$ then 
$t_{k+p-1}<t+\omega <t_{k+p}$.
\end{lemma}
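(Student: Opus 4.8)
The plan is to pin $t+\omega$ between the two consecutive impulse points $t_{k+p-1}$ and $t_{k+p}$ by proving the two one‑sided estimates separately. The inputs are: $t$ lies strictly inside $(t_{k-1},t_k)$ and is at distance more than $\epsilon$ from every impulse point, while $\omega$ is within $\epsilon_1<\epsilon$ of the shift $t_k^p=t_{k+p}-t_k$ (and, in the situation in which this lemma is applied, also of $t_{k-1}^p=t_{k+p-1}-t_{k-1}$). First I would extract from $t_{k-1}<t<t_k$ together with $|t-t_{k-1}|>\epsilon$ and $|t-t_k|>\epsilon$ the sharper bracketing
\[
t_{k-1}+\epsilon<t<t_k-\epsilon .
\]

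For the upper estimate, $|t_k^p-\omega|<\epsilon_1$ gives $\omega<t_{k+p}-t_k+\epsilon_1$, hence
\[
t+\omega<(t_k-\epsilon)+(t_{k+p}-t_k+\epsilon_1)=t_{k+p}-(\epsilon-\epsilon_1)<t_{k+p},
\]
using $\epsilon_1<\epsilon$. For the lower estimate I would use $|t_{k-1}^p-\omega|<\epsilon_1$, i.e. $\omega>t_{k+p-1}-t_{k-1}-\epsilon_1$, whence
\[
t+\omega>(t_{k-1}+\epsilon)+(t_{k+p-1}-t_{k-1}-\epsilon_1)=t_{k+p-1}+(\epsilon-\epsilon_1)>t_{k+p-1}.
\]
Combining the two inequalities gives $t_{k+p-1}<t+\omega<t_{k+p}$, which is the assertion.

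The arithmetic is immediate, so the one point worth dwelling on is that the lower estimate uses the bound $|t_{k-1}^p-\omega|<\epsilon_1$ at the index $k-1$, not merely $|t_k^p-\omega|<\epsilon_1$ at the index $k$. This costs nothing in practice: the almost‑periodicity statements in Lemma~\ref{lem1} and Lemma~\ref{lem2} furnish, for each $\omega\in\Omega$ and $p\in\mathcal{P}$, the estimate $|t_j^p-\omega|<\epsilon_1$ \emph{uniformly in} $j\in\mathbb{Z}$, in particular at $j=k-1$, so every invocation of Lemma~\ref{lem3} has the required ingredient in force. (I would flag that a bound uniform in the index is genuinely needed here: with only the single‑index hypothesis, a gap $t_{k+p}-t_{k+p-1}$ much smaller than $t_k-t_{k-1}$ would push $t+\omega$ below $t_{k+p-1}$, so the hypothesis should be read in that uniform sense.)
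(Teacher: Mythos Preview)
Your argument is correct and is precisely the elementary computation the authors have in mind; the paper itself omits the proof as ``straight forward'', so there is no alternative approach to compare against. Your observation that the lower estimate genuinely requires $|t_{k-1}^p-\omega|<\epsilon_1$ (not merely the single-index bound at $k$) is well taken: the lemma as literally stated is slightly underspecified, but every application in the paper (Lemmas~\ref{lem7} and the proof of Theorem~\ref{thm2}) draws $\omega$ and $p$ from Lemma~\ref{lem1} or~\ref{lem2}, where the bound $|t_j^p-\omega|<\epsilon_1$ is furnished uniformly in $j\in\mathbb{Z}$, so your reading of the hypothesis is the intended one.
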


\begin{proof} The proof is straight forward, so let us omit it here.
\end{proof}

\begin{lemma}\label{lem4} Let Assumptions {\rm(A6)} and {\rm (A7)}  hold. 
If $p\in \mathbb{Z}$ satisfies $|\gamma_{i+p}-\gamma_i|\leq \epsilon$ for all $i\in \mathbb{Z}$, then 
\begin{equation*}
\left|\Gamma (n+p, k+p)-\Gamma (n,k)\right|
\leq\frac{\Gamma_M}{1+\gamma_L}(k-n+1)\epsilon,\quad \forall k, n\in\mathbb{Z}, k\geq n.
\end{equation*}
\end{lemma}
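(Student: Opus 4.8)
The plan is to work with the logarithm of the ratio $\Gamma(n+p,k+p)/\Gamma(n,k)$ rather than with the difference directly. First I would observe that, since $\gamma_L>-1$ by (A6), every factor $1+\gamma_i$ is strictly positive, so $\Gamma(n,k)>0$ and one may write $\Gamma(n+p,k+p)=\Gamma(n,k)e^{\Delta}$ with
\[
\Delta:=\sum_{i=n}^{k}\bigl(\ln(1+\gamma_{i+p})-\ln(1+\gamma_i)\bigr).
\]

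Next I would estimate $\Delta$. For each $i$ the mean value theorem applied to $x\mapsto\ln(1+x)$ gives $\ln(1+\gamma_{i+p})-\ln(1+\gamma_i)=(\gamma_{i+p}-\gamma_i)/(1+\xi_i)$ for some $\xi_i$ between $\gamma_i$ and $\gamma_{i+p}$; since both of these are $\geq\gamma_L$, each summand is bounded in absolute value by $|\gamma_{i+p}-\gamma_i|/(1+\gamma_L)\leq\epsilon/(1+\gamma_L)$, and summing the $k-n+1$ terms yields $|\Delta|\leq(k-n+1)\epsilon/(1+\gamma_L)$. To pass back to the difference, I would note that replacing $(n,k,p)$ by $(n+p,k+p,-p)$ preserves the hypothesis of the lemma (the condition $|\gamma_{i-p}-\gamma_i|\leq\epsilon$ is just $|\gamma_{i+p}-\gamma_i|\leq\epsilon$ reindexed) and the quantity $k-n+1$, while interchanging $\Gamma(n,k)$ and $\Gamma(n+p,k+p)$; hence we may assume $\Delta\geq 0$. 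Then, using $1-e^{-\Delta}\leq\Delta$ and $\Gamma(n+p,k+p)\leq\Gamma_M$ (from (A6)),
\[
0\leq\Gamma(n+p,k+p)-\Gamma(n,k)=\Gamma(n+p,k+p)\bigl(1-e^{-\Delta}\bigr)\leq\Gamma_M\,\Delta\leq\frac{\Gamma_M}{1+\gamma_L}(k-n+1)\epsilon,
\]
which is the claim.

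The point I expect to be the real obstacle is the choice of this multiplicative route. The naive approach telescopes the product,
\[
\Gamma(n+p,k+p)-\Gamma(n,k)=\sum_{l=n}^{k}\Gamma(n+p,l+p-1)\,(\gamma_{l+p}-\gamma_l)\,\Gamma(l+1,k),
\]
but here one is forced to bound a product of two separate $\Gamma$-blocks, and (A6) only controls this by $\Gamma_M^{2}$, which can exceed $\Gamma_M/(1+\gamma_L)$ — so the naive estimate is too weak to give the stated constant. Working instead on the ratio $\prod_{i=n}^{k}(1+\gamma_{i+p})/(1+\gamma_i)$ is exactly what turns the denominators into the factor $1/(1+\gamma_L)$ while retaining only a single power of $\Gamma_M$, via $\max\{\Gamma(n,k),\Gamma(n+p,k+p)\}\leq\Gamma_M$. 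I would also note that Assumption (A7), though part of the standing hypotheses here, is not actually used in this purely algebraic estimate.
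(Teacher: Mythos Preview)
Your proof is correct and follows essentially the same route as the paper: the paper also writes both products as exponentials of sums of logarithms and combines the estimate $|\ln(1+u)-\ln(1+v)|\leq|u-v|/(1+\min\{u,v\})$ with $|e^{u}-e^{v}|\leq|u-v|\max\{e^{u},e^{v}\}\leq|u-v|\,\Gamma_M$. Your WLOG reduction to $\Delta\geq 0$ followed by $1-e^{-\Delta}\leq\Delta$ is just an unpacking of that last inequality, and your observation that (A7) plays no role is accurate.
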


\begin{proof} Using the facts that 
$\left|e^u-e^v\right|\leq |u-v|\max\{e^u,e^v\}, \forall u,v\in\mathbb{R}$
and $\left|\ln(1+u)-\ln(1+v)\right|\leq \dfrac{1}{1+\min\{u,v\}}|u-v|, \forall u,v>-1$,
from (A6), we have 
\begin{align*}
\left|\Gamma (n+p,k+p)-\Gamma (n,k)\right|
&\leq\left|\exp\bigg(\sum_{i=n+p}^{k+p}\ln(1+\gamma_i)\bigg)
-\exp\bigg(\sum_{i=n}^{k}\ln(1+\gamma_i)\bigg)\right| \\
& \leq \frac{{\Gamma_M}}{1+\gamma_L}
\sum\limits_{i=n}^k|\gamma_{i+p} -\gamma_i| 
\leq \frac{{\Gamma_M}}{1+\gamma_L}(k-n+1)\epsilon,\ k\geq n .
\end{align*}
The proof is completed.
\end{proof}

\begin{lemma}\label{lem5} Let Assumption {\rm(A7)} holds. 
For any $\alpha>0$, $0<\epsilon < \eta/2$, we have 
\begin{equation*}
\sum_{t_k<t}e^{-\alpha (t-t_k)}\leq \frac{1}{1-e^{-\alpha \eta}},\quad 
\sum_{t_k<t}\int_{t_k-\epsilon}^{t_k+\epsilon}e^{-\alpha (t-s)}ds 
\leq 2 e^{\frac{1}{2}\alpha \eta}\frac{\epsilon}{1-e^{-\alpha\eta}}.
\end{equation*}
\end{lemma}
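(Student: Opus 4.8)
\emph{Proof proposal.} The plan is to exploit Assumption {\rm(A7)}, in particular the uniform lower bound $\eta>0$ on the gaps $t_{k+1}-t_k$, in order to dominate each of the two sums by a convergent geometric series.

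For the first estimate I would fix $t$ and let $t_{k_0}$ be the largest impulsive point strictly smaller than $t$; this exists because $t_k\to+\infty$ as $k\to+\infty$, and the indices $k$ with $t_k<t$ are precisely those with $k\le k_0$. For every $j\ge 0$ the gap hypothesis telescopes to
\[
t_{k_0}-t_{k_0-j}=\sum_{\ell=1}^{j}\bigl(t_{k_0-\ell+1}-t_{k_0-\ell}\bigr)\ge j\eta,
\]
and since $t\ge t_{k_0}$ this gives $t-t_{k_0-j}\ge j\eta$. Hence
\[
\sum_{t_k<t}e^{-\alpha(t-t_k)}=\sum_{j\ge 0}e^{-\alpha(t-t_{k_0-j})}\le\sum_{j\ge 0}e^{-\alpha j\eta}=\frac{1}{1-e^{-\alpha\eta}},
\]
which is the first claimed bound.

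For the second estimate I would bound each integrand pointwise before summing. Since $s\mapsto e^{-\alpha(t-s)}$ is increasing, for $s\in[t_k-\epsilon,t_k+\epsilon]$ we have $e^{-\alpha(t-s)}\le e^{-\alpha(t-t_k-\epsilon)}=e^{\alpha\epsilon}e^{-\alpha(t-t_k)}$, so that
\[
\int_{t_k-\epsilon}^{t_k+\epsilon}e^{-\alpha(t-s)}\,ds\le 2\epsilon\,e^{\alpha\epsilon}e^{-\alpha(t-t_k)}.
\]
Summing over $t_k<t$ and applying the first estimate yields $\sum_{t_k<t}\int_{t_k-\epsilon}^{t_k+\epsilon}e^{-\alpha(t-s)}\,ds\le 2\epsilon\,e^{\alpha\epsilon}(1-e^{-\alpha\eta})^{-1}$, and finally the hypothesis $0<\epsilon<\eta/2$ gives $e^{\alpha\epsilon}\le e^{\alpha\eta/2}$, which produces exactly the stated right-hand side.

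I do not expect any genuine obstacle here. The only step that requires a little care is the bookkeeping for the one-sided infinite index set $\{k:t_k<t\}$ and the telescoping argument that converts $\inf_k(t_{k+1}-t_k)=\eta$ into the inequality $t-t_{k_0-j}\ge j\eta$; after that, both bounds reduce to summing an elementary geometric series.
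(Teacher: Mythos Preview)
Your argument is correct: the telescoping bound $t-t_{k_0-j}\ge j\eta$ reduces the first sum to a geometric series, and the pointwise bound on $e^{-\alpha(t-s)}$ over $[t_k-\epsilon,t_k+\epsilon]$ together with $\epsilon<\eta/2$ handles the second. The paper itself omits the proof entirely, stating only that it ``follows by some direct estimates,'' so your write-up is precisely the kind of elementary computation the authors had in mind and there is nothing further to compare.
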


\begin{proof} The proof follows by some direct estimates and, thus, is omitted here. 
\end{proof}

Now, let $\sigma =\max_{i\in\underline{m}}\{T, \tau_{iM}, \sigma_{iM}\}$, 
then $0<\sigma <+\infty$. From biomedical significance, 
we only consider the initial condition
\begin{equation}\label{eq9}
x(s)=\xi(s)\geq 0, \ s\in [\alpha-\sigma, \alpha),\ \xi(\alpha)>0,\ 
\xi \in PLC([\alpha-\sigma,\alpha],\mathbb{R}).
\end{equation}

It should be noted that problem \eqref{eq6} and \eqref{eq9} 
has a unique solution $x(t)=x(t;\alpha,\xi)$ defined on 
$[\alpha-\sigma,\infty)$ which is piecewise continuous with points 
of discontinuity of the first kind, namely $t_k, k\in\mathbb{Z}$, 
at which it is left continuous and the following relations are satisfied \cite{SP}
\begin{equation*}
x\left(t^-_k\right)=x(t_k),\ \Delta x(t_k):=x\left(t_k^+\right)-x\left(t^-_k\right)
=\gamma_kx\left(t_k^-\right)+\delta_k.
\end{equation*}

Related to \eqref{eq6}, we consider the following linear equation
\begin{equation}\label{eq10}
 \dot y(t)=- a(t)y(t), \ t\neq t_k,\quad \Delta y(t_k)=\gamma_ky(t^-_k),\ k\in \mathbb{Z}.
\end{equation}

\begin{lemma}\label{lem6} Let Assumptions {\rm(A1), (A6)} and {\rm(A7)} hold. Then 
\begin{equation*}
Be^{-a_M(t-s)} \leq  H(t,s)\leq Ae^{-a_L(t-s)}, \ s \leq t,
\end{equation*}
where
\begin{equation}\label{eq11}
H(t,s)=\begin{cases} \exp\left(-\int_s ^t a(r)dr\right),\  
\text{ if } \ t_{k-1}<s \leq t \leq t_k,\\
\Gamma(n,k)\exp\left(-\int_s^t a(r) dr\right),\ \text{ if }  
\ t_{n-1}<s \leq t_n \leq t_k<t \leq t_{k+1}
\end{cases}
\end{equation}
is the Cauchy matrix of \eqref{eq10}, $A= \max\left\{\Gamma_M, 1\right\}$ 
and $B= \min\left\{\Gamma_L, 1\right\}$.
\end{lemma}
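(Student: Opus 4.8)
The plan is to read the two-sided estimate off directly from the explicit formula \eqref{eq11}, bounding separately the exponential factor by means of (A1) and the impulsive factor by means of (A6).

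First I would justify that \eqref{eq11} is indeed the Cauchy matrix of \eqref{eq10}. On any interval containing no impulse point, the solution of $\dot y=-a(t)y$ issued from $y(s)$ is $y(t)=y(s)\exp\!\big(-\int_s^t a(r)\,dr\big)$, and passing through an impulse point $t_j$ multiplies the current value by $1+\gamma_j$; consequently, transporting an initial value from $s\in(t_{n-1},t_n]$ to $t\in(t_k,t_{k+1}]$ accumulates precisely the factors $1+\gamma_n,\dots,1+\gamma_k$, whose product is $\Gamma(n,k)$, which reproduces \eqref{eq11}. This is the standard construction from \cite{SP}, so I would only sketch it.

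Next, from (A1) we have $0<a_L\le a(r)\le a_M$ for every $r$, hence $a_L(t-s)\le\int_s^t a(r)\,dr\le a_M(t-s)$ whenever $s\le t$, so that
\[
e^{-a_M(t-s)}\le\exp\!\Big(-\int_s^t a(r)\,dr\Big)\le e^{-a_L(t-s)}.
\]
It remains to control the impulsive factor, which I would do by treating the two alternatives in \eqref{eq11}. In the first alternative ($t_{k-1}<s\le t\le t_k$) there is no impulsive factor, and since $B=\min\{\Gamma_L,1\}\le 1\le\max\{\Gamma_M,1\}=A$, the desired bound $Be^{-a_M(t-s)}\le H(t,s)\le Ae^{-a_L(t-s)}$ follows at once from the displayed inequality. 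In the second alternative ($t_{n-1}<s\le t_n\le t_k<t\le t_{k+1}$, so $n\le k$), assumption (A6) gives $0<\Gamma_L\le\Gamma(n,k)\le\Gamma_M$; multiplying the displayed inequality through by $\Gamma(n,k)$ and then using $B\le\Gamma_L\le\Gamma(n,k)$ on the left and $\Gamma(n,k)\le\Gamma_M\le A$ on the right yields exactly $Be^{-a_M(t-s)}\le H(t,s)\le Ae^{-a_L(t-s)}$. Note that $B>0$ and $A<\infty$ by (A6), so the estimate is non-degenerate.

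The whole argument is elementary; the only point that asks for a little care is the bookkeeping of the index set of the product $\Gamma(n,k)$ when $s$ and $t$ lie in different cells of the partition $\{t_k\}$ — one must check that exactly the impulses $t_n,\dots,t_k$ are crossed, in particular that $n\le k$, so that the bounds from (A6) legitimately apply to $\Gamma(n,k)$.
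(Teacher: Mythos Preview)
Your proposal is correct and is exactly the straightforward verification the paper has in mind: the paper simply writes ``The proof is straight forward from \eqref{eq11}, so let us omit it here,'' and your argument --- bounding $\exp(-\int_s^t a)$ via (A1) and the impulsive factor $\Gamma(n,k)$ via (A6), with the definitions of $A$ and $B$ absorbing the case of no impulses --- fills in precisely that omission.
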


\begin{proof} The proof is straight forward from \eqref{eq11}, so let us omit it here.
\end{proof}

Similar to Lemma 36 in \cite{SP} and Lemma 2.6 in \cite{Stamov}
we have the following lemma.

\begin{lemma}\label{lem7} Let Assumptions {\rm(A1), (A6)} and {\rm(A7)} hold. 
Then, for given $0<\epsilon_1<\epsilon$, relatively dense sets $\Omega\subset \mathbb{R}$, 
$\mathcal{P}\subset\mathbb{Z}$, satisfying
\begin{itemize}
\item[\rm(c1)] $|a(t+\omega)-a(t)|<\epsilon, \ t\in \mathbb{R}, \omega \in \Omega$;
\item[\rm (c2)] $|\gamma_{k+p}-\gamma_k|<\epsilon,\ 
|t^p_k-\omega|<\epsilon_1, \omega \in \Omega,\ p\in{\mathcal P},\ k\in \mathbb{Z}$,
\end{itemize}
the following estimate holds
\begin{equation*}
\left|H(t+\omega, s+\omega)-H(t,s)\right|\leq \epsilon Me^{-\frac{1}{2}a_L(t-s)}, 
\end{equation*}
for any $\omega \in \Omega$, $t,s\in\mathbb{R}$ satisfying $t\geq s$, 
$|t-t_k|>\epsilon, \ |s-t_k| >\epsilon, k\in \mathbb{Z}$, where 
\begin{equation}\label{eq12}
M=\max\left\{\frac{2}{a_L}, \Gamma_M\left[\frac{2}{a_L}
+\frac{1}{1+\gamma_L}\left(1+\frac{2}{a_L\eta}\right)\right]\right\}.
\end{equation}
\end{lemma}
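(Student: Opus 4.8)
The plan is to exploit the explicit two-branch formula \eqref{eq11} for the Cauchy matrix and to split the estimate according to whether or not an impulse point falls in the interval $(s,t]$. First I would fix $\omega\in\Omega$ and $t\ge s$ with $|t-t_k|>\epsilon$ and $|s-t_k|>\epsilon$ for all $k\in\mathbb Z$; by (c2) one may choose $p\in\mathcal P$ with $|t^p_k-\omega|<\epsilon_1$ for every $k$. Since $s$ and $t$ are not impulse points, locate indices $n,k$ with $s\in(t_{n-1},t_n)$ and $t\in(t_k,t_{k+1})$, so that $n\le k+1$. Applying Lemma~\ref{lem3} to $s$ and to $t$ separately gives $s+\omega\in(t_{n+p-1},t_{n+p})$ and $t+\omega\in(t_{k+p},t_{k+p+1})$; hence $H(t+\omega,s+\omega)$ is governed by the \emph{same} branch of \eqref{eq11} as $H(t,s)$, with all impulse indices translated by exactly $p$, and in particular $(s+\omega,t+\omega]$ contains precisely the points $t_{n+p},\dots,t_{k+p}$. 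This index bookkeeping is the step that needs the most care, and everything afterwards is elementary.

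In the case $n=k+1$ there is no impulse in $(s,t]$ and both matrices are pure exponentials, so using $|e^{-u}-e^{-v}|\le|u-v|\max\{e^{-u},e^{-v}\}$, the inequalities $\int_s^t a\ge a_L(t-s)$ and $\int_{s+\omega}^{t+\omega}a\ge a_L(t-s)$, and (c1), I get
\[
\bigl|H(t+\omega,s+\omega)-H(t,s)\bigr|\le\Bigl|\int_s^t\bigl(a(r+\omega)-a(r)\bigr)\,dr\Bigr|\,e^{-a_L(t-s)}\le\epsilon(t-s)e^{-a_L(t-s)}.
\]
In the case $n\le k$ I would write
\[
H(t+\omega,s+\omega)-H(t,s)=\Gamma(n+p,k+p)\Bigl[e^{-\int_{s+\omega}^{t+\omega}a}-e^{-\int_s^t a}\Bigr]+\bigl[\Gamma(n+p,k+p)-\Gamma(n,k)\bigr]e^{-\int_s^t a},
\]
bound the first term by $\Gamma_M\epsilon(t-s)e^{-a_L(t-s)}$ exactly as above (using $|\Gamma(n+p,k+p)|\le\Gamma_M$ from (A6)), and bound the second by $\frac{\Gamma_M}{1+\gamma_L}(k-n+1)\epsilon\,e^{-a_L(t-s)}$ via Lemma~\ref{lem4}.

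Finally I would convert these into the claimed form. Since $t_n,\dots,t_k\in(s,t)$ and consecutive impulse points are at least $\eta$ apart by (A7), we have $t-s>t_k-t_n\ge(k-n)\eta$, so $k-n+1\le(t-s)/\eta+1$. Using the elementary bounds $xe^{-\frac12 a_L x}\le\frac{2}{a_L e}\le\frac{2}{a_L}$ for $x\ge0$ (hence $(t-s)e^{-a_L(t-s)}\le\frac{2}{a_L}e^{-\frac12 a_L(t-s)}$) and $e^{-a_L(t-s)}\le e^{-\frac12 a_L(t-s)}$, the case $n=k+1$ yields the bound $\frac{2}{a_L}\epsilon e^{-\frac12 a_L(t-s)}$, while the case $n\le k$ yields $\Gamma_M\bigl[\frac{2}{a_L}+\frac{1}{1+\gamma_L}\bigl(1+\frac{2}{a_L\eta}\bigr)\bigr]\epsilon e^{-\frac12 a_L(t-s)}$. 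Taking the maximum of the two constants gives exactly $M$ as defined in \eqref{eq12}, which completes the argument. The only genuine obstacle is getting the translated impulse indices right in the first step; the remaining estimates are a routine chain of exponential inequalities.
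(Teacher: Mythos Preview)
Your argument is correct and follows essentially the same route as the paper: the same two-case split according to whether an impulse lies in $(s,t]$, the same decomposition $\Gamma(n+p,k+p)\bigl[e^{-\int_{s+\omega}^{t+\omega}a}-e^{-\int_s^t a}\bigr]+\bigl[\Gamma(n+p,k+p)-\Gamma(n,k)\bigr]e^{-\int_s^t a}$ in the second case, the same use of Lemma~\ref{lem3} for the index translation and Lemma~\ref{lem4} for the $\Gamma$-difference, and the same elementary bounds $(t-s)e^{-\frac12 a_L(t-s)}\le \frac{2}{a_L}$ and $k-n+1\le (t-s)/\eta+1$ to extract the constant $M$ in \eqref{eq12}. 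The only cosmetic difference is your indexing convention ($n=k+1$ versus the paper's ``$t_{k-1}<s\le t\le t_k$'' for the impulse-free case).
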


\begin{proof}

We divide the proof into two possible cases as follows.

\noindent
{\it Case 1:} $t_{k-1}<s\leq t\leq t_k$. By Lemma \ref{lem3}, 
$t_{k+p-1}<s+\omega\leq t+\omega <t_{k+p}$. 
Since $|a(t+\omega)-a(t)|\leq \epsilon$, $\forall t\in \mathbb{R}$, 
$\epsilon <\eta/2$ and $\frac12a_L(t-s)e^{-\frac{1}{2}a_L(t-s)}<1$, 
it follows from \eqref{eq11}, \eqref{eq12} that
\begin{equation}\label{eq13}
\begin{aligned}
|H(t+\omega,s+\omega)-H(t,s)|
&= \left|\exp\left(-\int_{s}^{t}a(r+\omega)dr\right)-\exp\left(-\int_{s}^{t}a(r)dr\right)\right| \\
& \leq e^{-a_L(t-s)}\int_s^t|a(r+\omega)-a(r)|dr\\
&\leq \frac{2}{a_L} \epsilon e^{-\frac{1}{2}a_L(t-s)}\leq \epsilon Me^{-\frac{1}{2}a_L(t-s)}.
  \end{aligned}
\end{equation}

\noindent
{\it Case 2:} $t_{n-1}<s \leq t_n \leq t_k<t \leq t_{k+1}$. Similarly, we have 
$$t_{n+p-1}<s+\omega<t_{n+p}\leq t+\omega <t_{k+p+1}.$$
By Lemma \ref{lem4}, from \eqref{eq11}-\eqref{eq13} we obtain
\begin{align*}
|H(t+\omega,s+\omega)-H(t,s)|&= \Gamma (n+p,k+p)\left|
\exp\left(-\int_{s+\omega}^{t+\omega}a(r)dr\right)
-\exp\left(-\int_{s}^{t}a(r)dr\right)\right|\\
&\quad + |\Gamma (n+p,k+p)-\Gamma (n,k)|\exp\left(-\int_{s}^{t}a(r)dr\right)\\
&\leq \frac{2\Gamma_M\epsilon}{a_L}e^{-\frac{1}{2}a_L(t-s)}
+\frac{\Gamma_M\epsilon}{1+\gamma_L}(k-n+1)e^{-a_L(t-s)}\\
&\leq \frac{2\Gamma_M\epsilon}{a_L}e^{-\frac{1}{2}a_L(t-s)}
+\frac{\Gamma_M\epsilon}{1+\gamma_L}\left(\frac{t-s}{\eta}+1\right)e^{-a_L(t-s)}\\
&\leq \frac{2\Gamma_M\epsilon}{a_L}e^{-\frac{1}{2}a_L(t-s)}
+\frac{\Gamma_M\epsilon}{1+\gamma_L}\left(1+\frac{2}{a_L\eta}\right)e^{-\frac{1}{2}a_L(t-s)}\\
&\leq \epsilon Me^{-\frac{1}{2}a_L(t-s)} .
\end{align*}
The proof is completed.
\end{proof}

It is worth noting that, the proof of Lemma \ref{lem7} is different from those
in \cite{ANS,SP,Stamov}. By employing Lemma \ref{lem4}, we obtain
a new bound for constant $M$ given in \eqref{eq4}.

\begin{lemma}[\cite{L}]\label{lem8} 
Assume that there exist constants $R,S>0,\tau\geq 0$, $T_0\in\mathbb{R}$ and a function 
$y\in PLC([T_0-\tau,\infty),\mathbb{R}^+)$ satisfying
\begin{itemize}
\item[\rm(d1)] $\Delta y(t_k)\leq \gamma_ky(t^-_k)$ for $t_k\geq T_0$, where $\gamma_k> -1$ and 
$\max_{ t_k\geq T_0}\left\{(\gamma_k+1)^{-1},1\right\} <\dfrac{R}{S}$;
\item[\rm(d2)] $D^+y(t)\leq -Ry(t)+S\overline{y}(t)$ for $t\geq T_0, t\neq t_k$, 
where $\overline{y}(t)=\sup_{t-\tau \leq s \leq t}y(s)$ and 
$D^+$ denotes the upper-right Dini derivative;
\item[\rm(d3)] $\tau\leq t_k-t_{k-1}$ for  all $k\in \mathbb{Z}$ satisfies $t_k\geq T_0$.
\end{itemize}
Then
\begin{equation*}
y(t)\leq\overline{y}(T_0)\bigg(\prod_{T_0< t_k \leq t}(\gamma_k+1)\bigg)e^{-\lambda (t-T_0)}, \forall t\geq T_0,
\end{equation*}
where $0<\lambda \leq R-S \max_{ t_k\geq T_0}\left\{(\gamma_k+1)^{-1},1\right\}e^{\lambda \tau}.$
\end{lemma}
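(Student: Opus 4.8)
The plan is to derive the exponential decay estimate by a contradiction argument combined with the standard impulsive comparison technique. First I would fix $\lambda>0$ satisfying $0<\lambda\le R-S\max_{t_k\ge T_0}\{(\gamma_k+1)^{-1},1\}e^{\lambda\tau}$; such a $\lambda$ exists because at $\lambda=0$ the right‑hand side equals $R-S\max\{\cdots\}>0$ by hypothesis (d1), and the map $\lambda\mapsto R-S\max\{\cdots\}e^{\lambda\tau}$ is continuous and strictly decreasing, so it stays positive on a right‑neighbourhood of $0$. Then I would introduce the rescaled function
\[
z(t)=y(t)\,e^{\lambda(t-T_0)}\Big(\prod_{T_0<t_k\le t}(\gamma_k+1)\Big)^{-1},\qquad t\ge T_0-\tau,
\]
with the empty product understood as $1$, and the goal becomes $z(t)\le \overline{y}(T_0)$ for all $t\ge T_0$.

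Next I would check the two mechanisms that could let $z$ grow. At an impulse point $t_k\ge T_0$, using (d1) one has $y(t_k^+)=y(t_k^-)+\Delta y(t_k)\le (\gamma_k+1)y(t_k^-)$, and since the product factor picks up exactly $(\gamma_k+1)$ as $t$ crosses $t_k$, a short computation gives $z(t_k^+)\le z(t_k^-)$; so $z$ does not jump upward. Between impulses, for $t\ne t_k$ I would compute $D^+z(t)$ from (d2): writing $P(t)=\prod_{T_0<t_k\le t}(\gamma_k+1)$, we get
\[
D^+z(t)=P(t)^{-1}e^{\lambda(t-T_0)}\big(D^+y(t)+\lambda y(t)\big)
\le P(t)^{-1}e^{\lambda(t-T_0)}\big(-(R-\lambda)y(t)+S\overline{y}(t)\big).
\]

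The heart of the argument is then the comparison step. Suppose, for contradiction, that $z(t)>\overline{y}(T_0)=:K$ for some $t>T_0$; let $t^\ast$ be (essentially) the first such time, so $z(s)\le K$ for $T_0-\tau\le s\le t^\ast$ — here one must be careful that $z$ is only piecewise left‑continuous, so $t^\ast$ is the infimum of the bad set and either $z(t^\ast)=K$ with $D^+z(t^\ast)\ge 0$, or the excess is produced by an upward jump, which we have just excluded. At such a $t^\ast$ I would bound $\overline{y}(t^\ast)=\sup_{t^\ast-\tau\le s\le t^\ast}y(s)$ in terms of $z$ values: by (d3), $\tau\le t_k-t_{k-1}$, so the interval $[t^\ast-\tau,t^\ast]$ contains at most one impulse point, which lets me control the ratio of product factors $P(s)/P(t^\ast)$ for $s$ in that window by $\max\{(\gamma_k+1)^{-1},1\}$, and consequently
\[
\overline{y}(t^\ast)\le K\,P(t^\ast)\,e^{-\lambda(t^\ast-T_0)}\,e^{\lambda\tau}\max_{t_k\ge T_0}\{(\gamma_k+1)^{-1},1\}.
\]
Plugging this and $y(t^\ast)=K\,P(t^\ast)e^{-\lambda(t^\ast-T_0)}$ into the estimate for $D^+z(t^\ast)$ yields
\[
D^+z(t^\ast)\le K\Big(-(R-\lambda)+S e^{\lambda\tau}\max_{t_k\ge T_0}\{(\gamma_k+1)^{-1},1\}\Big)\le 0
\]
by the choice of $\lambda$, and with strictness (or a standard $\varepsilon$‑perturbation of $K$) this contradicts $D^+z(t^\ast)\ge 0$, proving $z(t)\le K$ for all $t\ge T_0$, which is exactly the claimed bound. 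I expect the main obstacle to be the bookkeeping in this last step: handling the piecewise‑left‑continuity so that "first exit time" makes sense, and carefully tracking the product factors $P(\cdot)$ across the (at most one) impulse inside the delay window $[t^\ast-\tau,t^\ast]$ so that the $\max\{(\gamma_k+1)^{-1},1\}$ factor comes out correctly — assumption (d3) is precisely what makes this clean, and losing it would force a messier product over several impulses.
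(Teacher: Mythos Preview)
The paper does not prove this lemma; it is quoted from reference~\cite{L} and stated without argument, so there is nothing in the paper itself to compare your proposal against.

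On its own merits your outline is correct and is the standard Halanay-inequality-with-impulses argument. The rescaling $z(t)=y(t)e^{\lambda(t-T_0)}P(t)^{-1}$ reduces the claim to $z\le \overline y(T_0)$; condition~(d1) gives $z(t_k^+)\le z(t_k^-)$, and the first-exit contradiction using (d2), (d3) and the defining inequality for $\lambda$ yields $D^+z(t^\ast)\le0$, which (after the $\varepsilon$-perturbation you mention) closes the argument. The one refinement I would add to your bookkeeping remark is to define $P(t)=\prod_{T_0<t_k<t}(\gamma_k+1)$ with a \emph{strict} upper inequality, so that $P$ is left-continuous like $y$; this makes $z$ genuinely left-continuous and rules out the spurious possibility that $z$ jumps upward at $t_k$ merely because $y$ is left-continuous while the product (with $\le$) is right-continuous. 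With that convention the first-exit time $t^\ast$ is automatically a non-impulse point and $z(t^\ast)=K$, $D^+z(t^\ast)\ge0$ hold as you need. The resulting bound differs from the stated one only by a single factor $(\gamma_k+1)$ at the exact points $t=t_k$, which is immaterial in the paper's application (Theorem~\ref{thm4}) where the product is immediately dominated by $\Gamma_M$.
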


\section{Main results} 

Let us set $\mathcal{D}_1=\Big\{\phi\in PLC(\mathbb{R},\mathbb{R}): \phi \text{ is almost periodic}, 
\phi(t)\geq 0 \text{ for all } t\in \mathbb{R}\Big\}$ and $\|\phi\|=\sup _{t\in \mathbb{R}}|\phi (t)|$. 
We define an operator $F: \mathcal{D}_1\to PLC(\mathbb{R},\mathbb{R})$ as follows
\begin{equation}\label{eq14}
\begin{aligned}
F\phi(t)=\int_{-\infty}^tH(t,s)\sum_{i=1}^m\bigg\{ &\frac{b_i(s)}{1+\phi^{\alpha_i}(s-\tau_i(s))}
+c_i(s)\int_0^T\frac{v_i(r)}{1+\phi^{\beta_i}(s-r)}dr\\
&-H_i(s,\phi(s-\sigma_i(s)))\bigg\}ds+\sum_{t_k<t}H(t,t_k)\delta_k.
\end{aligned}
\end{equation}

It can be verified that $x^*(t)=\phi (t)$ is an almost periodic solution on 
$\mathcal{D} _1$ of \eqref{eq6} if and only if $F\phi = \phi$. 

We define the following constants
\begin{equation}\label{eq15}
\begin{aligned}
\underline{\delta}&= \inf_{k\in\mathbb{Z}}|\delta_k|, \ \overline{\delta}
= \sup_{k\in\mathbb{Z}}|\delta_k|, \ 
\overline{\eta} =\sup_{k\in \mathbb{Z}}t^1_k,\\
M_1&=\frac{A}{a_L}\sum_{i=1}^m\left(b_{iM}+c_{iM}-H_{iL}\right)+\frac{A\delta_M}{1-e^{-a_L\eta}},\\
M_2&= \begin{cases}\displaystyle
\frac{B}{a_M}\sum_{i=1}^m\bigg(\frac{b_{iL}}{1+M_1^{\alpha_i}}
+\frac{c_{iL}}{1+M_1^{\beta_i}}-H_{iM}\bigg)
+\frac{B\delta_Le^{-a_M{\overline{\eta}}}}{1-e^{-a_M{ \bar \eta}}}\ \text{ if } \delta_L\geq 0,\\
\frac{B}{a_M}\sum_{i=1}^m\bigg(\dfrac{b_{iL}}{1+M_1^{\alpha_i}}
+\frac{c_{iL}}{1+M_1^{\beta_i}}-H_{iM}\bigg)
+\frac{A\delta_L}{1-e^{-a_L \eta}}\ \text{ if } \delta_L<0.
\end{cases}
\end{aligned}
\end{equation}

\begin{lemma}\label{lem9} 
Let Assumptions {\rm(A1)-(A7)} hold. If $\phi\in\mathcal{D}_1$ then
\[
M_2\leq F\phi (t)\leq M_1,\ \forall t\in \mathbb{R}.
\]
\end{lemma}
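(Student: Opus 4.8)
The plan is to establish the two-sided bound by estimating the explicit expression \eqref{eq14} term by term, using the kernel bounds from Lemma~\ref{lem6} together with the summation estimate from Lemma~\ref{lem5}. First I would prove the upper bound $F\phi(t)\le M_1$. Since $\phi\in\mathcal D_1$ is nonnegative, each denominator $1+\phi^{\alpha_i}(\cdot)$ and $1+\phi^{\beta_i}(\cdot)$ is at least $1$, so the integrand in the sum is bounded above by $\sum_i\big(b_i(s)+c_i(s)-H_i(s,\phi(s-\sigma_i(s)))\big)\le\sum_i(b_{iM}+c_{iM}-H_{iL})$; note the harvesting terms help because they are subtracted and bounded below by $H_{iL}$. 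Applying $H(t,s)\le Ae^{-a_L(t-s)}$ and $\int_{-\infty}^t Ae^{-a_L(t-s)}\,ds=A/a_L$ handles the integral part. For the impulsive sum, $\sum_{t_k<t}H(t,t_k)\delta_k\le A\delta_M\sum_{t_k<t}e^{-a_L(t-t_k)}\le A\delta_M/(1-e^{-a_L\eta})$ by the first inequality in Lemma~\ref{lem5} (with $\alpha=a_L$). Adding these gives exactly $M_1$.

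Next I would prove the lower bound $F\phi(t)\ge M_2$, which is where the case split on the sign of $\delta_L$ enters. For the integral part, I use the \emph{upper} bound $M_1$ on $\phi$ that we have just established: since $\phi(t)=F\phi(t)$ is not assumed here, but rather we only know $\phi\in\mathcal D_1$, I should instead observe that for the lower bound the relevant estimate is $0\le\phi\le M_1$ only if $\phi$ is a fixed point — so more carefully, the statement is for arbitrary $\phi\in\mathcal D_1$, and I should bound $1/(1+\phi^{\alpha_i})\ge 0$ trivially, but that loses the positive contribution. Re-examining \eqref{eq15}, the terms $b_{iL}/(1+M_1^{\alpha_i})$ suggest the lemma is in fact intended to be applied on the invariant set $\mathcal D=\{\phi\in\mathcal D_1:M_2\le\phi\le M_1\}$; I would state and use that on such $\phi$ we have $1/(1+\phi^{\alpha_i}(\cdot))\ge 1/(1+M_1^{\alpha_i})$, and similarly for $\beta_i$, while $H_i(s,\phi(\cdot))\le H_{iM}$. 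Then $H(t,s)\ge Be^{-a_M(t-s)}$ and $\int_{-\infty}^t Be^{-a_M(t-s)}\,ds=B/a_M$ give the first term of $M_2$.

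For the impulsive sum in the lower bound, the sign of $\delta_k$ matters: if $\delta_L\ge 0$ then $\delta_k\ge\delta_L\ge 0$ and $H(t,t_k)\ge Be^{-a_M(t-t_k)}$, and one needs a \emph{lower} bound on $\sum_{t_k<t}e^{-a_M(t-t_k)}$; since consecutive gaps are at most $\overline\eta$, the nearest impulse below $t$ contributes at least $e^{-a_M\overline\eta}$ and, summing the geometric-type lower bound, one gets $\ge e^{-a_M\overline\eta}/(1-e^{-a_M\overline\eta})$, yielding the first branch of $M_2$. If $\delta_L<0$, then $\delta_k$ may be negative, so we bound $\sum_{t_k<t}H(t,t_k)\delta_k\ge\delta_L\sum_{t_k<t}H(t,t_k)\ge\delta_L\cdot A\sum_{t_k<t}e^{-a_L(t-t_k)}\ge\delta_L A/(1-e^{-a_L\eta})$, using $H(t,t_k)\le Ae^{-a_L(t-t_k)}$ and the first estimate of Lemma~\ref{lem5}; this gives the second branch.

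The main obstacle I anticipate is bookkeeping the direction of every inequality correctly — in particular making sure that on the lower-bound side one uses $H(t,s)\ge Be^{-a_M(t-s)}$ for the (nonnegative) $b_i,c_i$ contributions but $H(t,s)\le Ae^{-a_L(t-s)}$ for the (negative) $-H_i$ and negative-$\delta_k$ contributions, and dually on the upper-bound side — together with verifying that the lemma is really being applied to $\phi$ already constrained to $[M_2,M_1]$, since otherwise the quantities $M_1^{\alpha_i}$ in the denominators of $M_2$ would not be justified. Once the correct monotonicity of each piece is pinned down, the rest is the routine application of Lemmas~\ref{lem5} and~\ref{lem6}.
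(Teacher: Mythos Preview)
Your plan is essentially the paper's own argument: bound $H(t,s)$ above and below via Lemma~\ref{lem6}, integrate, and handle the impulsive sum with Lemma~\ref{lem5}, splitting the lower bound on $\sum_{t_k<t}H(t,t_k)\delta_k$ according to the sign of $\delta_L$ exactly as in \eqref{eq17}--\eqref{eq18}.

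The concern you raise is well-founded and is in fact a point the paper glosses over. In \eqref{eq19} the paper silently uses $\phi^{\alpha_i}\le M_1^{\alpha_i}$ and $\phi^{\beta_i}\le M_1^{\beta_i}$ even though the hypothesis is only $\phi\in\mathcal D_1$, where no a~priori upper bound on $\phi$ is available (the already-proved $F\phi\le M_1$ does not give $\phi\le M_1$). So as literally stated the lower bound in the lemma is not fully justified for arbitrary $\phi\in\mathcal D_1$; your instinct to restrict to $\mathcal D_2=\{\phi\in\mathcal D_1:M_2\le\phi\le M_1\}$ is the correct repair, and it suffices for the only place the lemma is used (showing $F(\mathcal D_2)\subset\mathcal D_2$ in Theorem~\ref{thm3}). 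Your additional worry about which kernel bound to apply to the negative $-H_i$ contribution is likewise a genuine wrinkle the paper does not discuss: \eqref{eq19} bounds the entire integrand below by a single constant and then uses $\int_{-\infty}^t H(t,s)\,ds\ge B/a_M$, which is valid only when that constant is nonnegative.
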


\begin{proof} Let $\phi \in\mathcal{D}_1$. By Lemma \ref{lem5} and Lemma \ref{lem6}, from \eqref{eq14} we have 
\begin{equation}\label{eq16}
\begin{aligned}
F\phi (t)&\leq\int_{-\infty}^tAe^{-a_L(t-s)}\sum_{i=1}^m\left(b_{iM}+c_{iM}-H_{iL}\right)ds
+A\delta_M\sum_{t_k<t}e^{-a_L(t-t_k)}\\
&\leq \frac{A}{a_L}\sum_{i=1}^m\left(b_{iM}+c_{iM}-H_{iL}\right)
+\frac{A\delta_M}{1-e^{-a_L\eta}}=M_1,\ \forall t\in \mathbb{R}.
\end{aligned}
\end{equation}

For each $t\in \mathbb{R}$, let $n_0$ be an integer such that 
$t_{n_0}<t\leq t_{n_0+1}$. If $\delta_L\geq 0$ then, by Lemma \ref{lem6},
it follows from the fact 
$(n-k)\eta \leq t_n-t_k\leq (n-k)\bar \eta,\ \forall k\leq n$ that
\begin{equation}\label{eq17}
\begin{aligned}
\sum_{t_k<t}H(t,t_k)\delta_k &\geq \sum_{t_k<t}B\delta_L e^{-a_M (t-t_k)}
\geq \sum_{t_k\leq t_{n_0}}B\delta_Le^{-a_M (t_{n_0+1}-t_k)}\\
&=\sum_{k\leq n_0}B\delta_Le^{-a_M (t_{n_0+1}-t_k)}
\geq\sum_{q=1}^{\infty}B\delta_Le^{-a_M \bar \eta q}
=\frac{B\delta_Le^{-a_M\bar  \eta}}{1-e^{-a_M\bar  \eta}}.
\end{aligned}
\end{equation}

If $\delta_L<0$ then from (A7) and Lemma \ref{lem6}, we have
\begin{equation}\label{eq18}
\begin{aligned}
\sum_{t_k<t}H(t,t_k)\delta_k&\geq \sum_{t_k<t}A\delta_L e^{-a_L(t-t_k)}
\geq \sum_{t_k\leq t_{n_0}}A\delta_L e^{-a_L(t_{n_0}-t_k)}\\
&\geq \sum_{q=0}^\infty A\delta_Le^{-a_Lq\eta}=\frac{A\delta}{1-e^{-a_L\eta}}.
\end{aligned}
\end{equation}

From \eqref{eq17} and \eqref{eq18} we obtain
\begin{equation}\label{eq19}
\begin{aligned}
F\phi(t)\geq \frac{B}{a_M}\sum_{i=1}^m\bigg(\frac{b_{iL}}{1+M_1^{\alpha_i}}
+\frac{c_{iL}}{1+M_1^{\beta_i}}-H_{iM}\bigg)+\sum_{t_k<t}H(t,t_k)\delta_k\geq M_2.
\end{aligned}
\end{equation}
The proof is completed.
\end{proof}

Now we are in position to introduce our main results as follows.

\begin{theorem}\label{thm2}  
Under the Assumptions {\rm(A1)-(A7)}, if $\phi\in \mathcal{D}_1$
then $F\phi (t)$ is almost periodic.
\end{theorem}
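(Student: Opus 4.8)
The plan is to show that $F\phi$ satisfies the three conditions in Definition \ref{def2}, namely the uniform almost periodicity of the sequences $\{t_k^p\}$ (immediate from (A7)), the equicontinuity-type condition (ii) on intervals of continuity, and the translation condition (iii). Before starting, I would record that by Lemma \ref{lem9} the values $F\phi(t)$ all lie in the compact set $\mathcal{M}=[M_2,M_1]\subset\mathbb{R}$, which is exactly the compact set needed to invoke Lemma \ref{lem2}; thus for each $\epsilon>0$ I obtain $\epsilon_1\in(0,\epsilon)$ and relatively dense sets $\Omega\subset\mathbb{R}$, $\mathcal{P}\subset\mathbb{Z}$ such that the functions $a$, $b_i$, $c_i$, $\tau_i$, $\sigma_i$, the $H_i(\cdot,x)$ (uniformly for $x\in\mathcal{M}$), and the sequences $\{\gamma_k\}$, $\{\delta_k\}$ all have common $\epsilon$-almost periods on $\Omega$ and $\mathcal{P}$, with $|t_k^p-\omega|<\epsilon_1$. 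I would also note that the integrand $g_i(s,\phi)=b_i(s)/(1+\phi^{\alpha_i}(s-\tau_i(s)))+c_i(s)\int_0^T v_i(r)/(1+\phi^{\beta_i}(s-r))\,dr-H_i(s,\phi(s-\sigma_i(s)))$ is uniformly bounded, say by a constant $K$ depending only on the $b_{iM},c_{iM},H_{iM}$, since $\phi\ge 0$ forces each denominator $\ge 1$.

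For condition (ii): on an interval of continuity $(t_{k-1},t_k)$, the first term of $F\phi$ is $\int_{-\infty}^t H(t,s)\sum_i g_i(s)\,ds$, which I would differentiate (or directly estimate the difference of two nearby values using $|H(t,s)|\le Ae^{-a_L(t-s)}$ and $|\partial_t H(t,s)|\le a_M|H(t,s)|$ away from impulse points) to get a Lipschitz bound; the second term $\sum_{t_k<t}H(t,t_k)\delta_k$ is constant on each such interval, hence contributes nothing. So $|F\phi(t)-F\phi(\bar t)|\le C|t-\bar t|$ for $t,\bar t$ in the same continuity interval, giving (ii) with $\delta=\epsilon/C$.

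For condition (iii), which is the crux: fix $\omega\in\Omega$ (with associated $p\in\mathcal{P}$) and $t$ with $|t-t_k|>\epsilon$ for all $k$. I would write
\[
F\phi(t+\omega)-F\phi(t)=\underbrace{\int_{-\infty}^{t}\big[H(t+\omega,s+\omega)-H(t,s)\big]\sum_i g_i(s+\omega)\,ds}_{I_1}
+\underbrace{\int_{-\infty}^{t}H(t,s)\sum_i\big[g_i(s+\omega)-g_i(s)\big]\,ds}_{I_2}
\]
\[
+\underbrace{\sum_{t_k<t}\big[H(t+\omega,t_k+\omega)-H(t,t_k)\big]\delta_k}_{I_3}
+\underbrace{\sum_{t_k<t}H(t,t_k)(\delta_{k+p}-\delta_k)}_{I_4},
\]
after the change of variable $s\mapsto s-\omega$ in the first integral and the index shift $t_k+\omega\approx t_{k+p}$ (justified by Lemma \ref{lem3}) in the sum, using $\int_0^T v_i=1$ and noting that $\phi$'s almost periodicity handles the shifts inside $g_i$. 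Term $I_1$ is controlled by Lemma \ref{lem7}: $|H(t+\omega,s+\omega)-H(t,s)|\le \epsilon M e^{-\frac12 a_L(t-s)}$ on the set where $|s-t_k|>\epsilon$, and the remaining "bad" $s$-set near impulse points has total measure $O(\epsilon)$ by Lemma \ref{lem5}; together with the bound $K$ on $\sum_i g_i$ this gives $|I_1|\le C_1\epsilon$. Term $I_2$ uses $|g_i(s+\omega)-g_i(s)|\le C\epsilon$ (from (a2)/(b2) for $a,b_i,c_i,H_i$ plus the almost periodicity of $\phi$ and of $\tau_i,\sigma_i$, together with the Lipschitz property (A3) and the fact that $u\mapsto 1/(1+u^{\alpha_i})$ is Lipschitz on $[0,\infty)$) and $\int_{-\infty}^t Ae^{-a_L(t-s)}ds=A/a_L$, so $|I_2|\le C_2\epsilon$. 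Term $I_3$ again uses Lemma \ref{lem7} together with $\sum_{t_k<t}e^{-\frac12 a_L(t-t_k)}\le (1-e^{-a_L\eta/2})^{-1}$ from Lemma \ref{lem5} and $|\delta_k|\le\delta_M$, giving $|I_3|\le C_3\epsilon$; term $I_4$ uses (a3)/(b3), $|\delta_{k+p}-\delta_k|<\epsilon$, and the same geometric series, giving $|I_4|\le C_4\epsilon$. Summing, $|F\phi(t+\omega)-F\phi(t)|\le C\epsilon$ for all admissible $t$, which is (iii) after rescaling $\epsilon$.

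The main obstacle is the bookkeeping in $I_1$ and $I_3$: one must carefully split the real line (resp. the impulse index set) into the "good" region where Lemma \ref{lem7} applies and a "bad" neighborhood of the $t_k$ of total measure (resp. cardinality weighted by exponential decay) of order $\epsilon$, and verify that on the bad region the crude bound $|H(t+\omega,s+\omega)-H(t,s)|\le |H(t+\omega,s+\omega)|+|H(t,s)|\le 2Ae^{-a_L\cdot(\text{distance})}$ combined with Lemma \ref{lem5}'s second estimate $\sum_{t_k<t}\int_{t_k-\epsilon}^{t_k+\epsilon}e^{-\alpha(t-s)}ds\le 2e^{\alpha\eta/2}\epsilon/(1-e^{-\alpha\eta})$ still yields an $O(\epsilon)$ contribution. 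Also, one must check that the index reshuffling $t_k\mapsto t_{k+p}$ under translation by $\omega$ correctly pairs terms (Lemma \ref{lem3}) and leaves only finitely many boundary mismatches, each exponentially small; this is routine but needs care with the indices. Everything else is a direct application of the preliminary lemmas and the elementary Lipschitz estimates for $x\mapsto (1+x^{\alpha})^{-1}$ on $\mathbb{R}_+$.
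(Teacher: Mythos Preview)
Your overall decomposition mirrors the paper's, but two of your steps would fail as written. First, your $I_2$ bound asserts $|g_i(s+\omega)-g_i(s)|\le C\epsilon$ uniformly in $s$, relying on the almost periodicity of $\phi$; but the bound $|\phi(r+\omega)-\phi(r)|<\epsilon$ in Definition~\ref{def2}(iii) holds only for $|r-t_k|>\epsilon$, and the delayed arguments $r=s-\tau_i(s)$, $s-\sigma_i(s)$, $s-r$ can lie near an impulse point even when $s$ does not. Thus $I_2$ also requires a good/bad split, not only $I_1$ and $I_3$. The paper carries this out in its terms $C_{i3},C_{i4},D_{i3},E_{i2},E_{i3}$ by inverting the maps $t\mapsto t-\tau_i(t)$ and $t\mapsto t-\sigma_i(t)$ (this is exactly where assumption (A4) enters) so that the bad $s$-set becomes a union of intervals of length $O(\epsilon)$ to which Lemma~\ref{lem5} applies; your outline omits this mechanism entirely. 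Relatedly, the claim that $u\mapsto 1/(1+u^{\alpha_i})$ is Lipschitz on $[0,\infty)$ is false for $\alpha_i<1$; one must work on the compact range $[\phi_L,\|\phi\|]$ of $\phi$, not the range $[M_2,M_1]$ of $F\phi$, and that is also the correct compact set $\mathcal{M}$ for invoking Lemma~\ref{lem2} on the $H_i$.

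Second, Lemma~\ref{lem7} cannot be applied to your $I_3$: its hypothesis demands $|s-t_j|>\epsilon$ for all $j$, but after the reindexing $t_k+\omega\approx t_{k+p}$ the second argument is $t_{k+p}$, and $t_{k+p}-\omega$ lies within $\epsilon_1<\epsilon$ of $t_k$ by (b3), so the hypothesis is violated at every summand. The paper therefore does \emph{not} use Lemma~\ref{lem7} for this piece; it estimates $|H(t+\omega,t_{k+p})-H(t,t_k)|$ directly from the explicit formula \eqref{eq11}, using Lemma~\ref{lem4} for the difference of the $\Gamma$-products and the smallness of $|t_k^p-\omega|$ for the exponential factor; see the computation of $G_1$ in \eqref{eq46}--\eqref{eq50}.
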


\begin{proof} 
Let $\phi \in\mathcal{D}_1$. For given 
$\epsilon\in(0, \eta/2)$, there exists $0<\delta <\epsilon/2$ 
such that, if $t,\bar t$ belong to the same interval of continuity of $\phi(t)$ then 
\begin{equation}\label{eq20}
\left|\phi(t)-\phi\left(\bar t\right)\right|<\epsilon, \; |t-\bar t|<\delta.
\end{equation}

By Lemma \ref{lem2} and Lemma \ref{lem7}, there exist $0< \epsilon _1<\delta$, 
relatively dense sets $\Omega\subset\mathbb{R}$, $\mathcal{P}\subset\mathbb{Z}$ such that,
for all $\omega\in\Omega$, we have
\begin{equation}\label{eq21}
\begin{aligned}
&|H(t+\omega, s+\omega)-H(t,s)|\leq \delta Me^{-\frac{1}{2}a_L(t-s)},\ 
t\geq s, |t-t_k|>\delta, |s-t_k| >\delta;\\
& |\phi (t+\omega)-\phi (t)|<\delta, \ t\in \mathbb{R}, |t-t_k|>\delta, k\in \mathbb{Z};\\
&|H_i(t+\omega,x)-H_i(t,x)|<\delta, \ t\in \mathbb{R}, x\in [\phi_L,\|\phi\|], i\in\underline{m};\\
&|a(t+\omega)-a(t)|<\delta, \ t\in \mathbb{R};\\
&|b_i(t+\omega)-b_i(t)|<\delta,\; |c_i(t+\omega)-c_i(t)|<\delta, \ t\in \mathbb{R}, i\in\underline{m};\\
& |\tau_i(t+\omega)-\tau_i(t)|<\delta, \; |\sigma_i(t+\omega)-\sigma_i(t)|<\delta, \ t\in \mathbb{R}, i\in\underline{m};\\
&|\gamma _{k+p}-\gamma _k|< \delta,\; |\delta _{k+p}-\delta _k|< \delta,\; 
|t^p_k-\omega|<\epsilon _1, p\in\mathcal{P}, k\in \mathbb{Z}.
\end{aligned}
\end{equation}

Let $\omega \in \Omega$, $p\in \mathcal{P}$. One can easily see that
\begin{equation}\label{eq22}
\begin{aligned}
F\phi (t+\omega)=\; &\int_{-\infty}^tH(t+\omega,s+\omega)
\sum_{i=1}^m\bigg\{\frac{b_i(s+\omega)}{1+\phi^{\alpha_i}(s+\omega-\tau_i(s+\omega))}\\
&+\int_0^T\frac{c_i(s+\omega)v_i(r)}{1+\phi^{\beta_i}(s+\omega-r)}dr
-H_i(s+\omega,\phi(s+\omega-\sigma_i(s+\omega)))\bigg\}ds\\
&+\sum_{t_k<t}H(t+\omega,t_{k+p})\delta_{k+p}.
\end{aligned}
\end{equation}

We define $E_\epsilon(\{t_k\})=\{t\in \mathbb{R}:\; |t-t_k|>\epsilon, \forall k\in \mathbb{Z}\}$. 
For $t\in E_\epsilon(\{t_k\})$, $i\in\underline{m}$, let us set
\begin{equation}\label{eq23}
\begin{aligned}
C_i&=\int_{-\infty}^t\left|\frac{H(t+\omega,s+\omega)
b_i(s+\omega)}{1+\phi^{\alpha_i}(s+\omega-\tau_i(s+\omega))}
-\frac{H(t,s)b_i(s)}{1+\phi^{\alpha_i}(s-\tau_i(s))}\right|ds,\\
D_i&=\int_{-\infty}^t\left|H(t+\omega,s+\omega)\int_0^T
\frac{c_i(s+\omega)v_i(r)}{1+\phi^{\beta_i}(s+\omega-r)}dr
-H(t,s)\int_0^T\frac{c_i(s)v_i(r)}{1+\phi^{\beta_i}(s-r)}dr\right|ds,\\
E_i&=\int_{-\infty}^t\left|H(t+\omega,s+\omega)H_i(s+\omega,\phi(s+\omega-\sigma_i(s+\omega)))
-H(t,s)H_i(s,\phi(s-\sigma_i(s)))\right|ds,\\
G&=\sum_{t_k<t}\left|H(t+\omega,t_{k+p})\delta_{k+p}-H(t,t_k)\delta_k\right|,
\end{aligned}
\end{equation}
then we have 
\begin{equation}\label{eq24}
|F\phi(t+\omega)-F\phi(t)|\leq \sum_{i=1}^m(C_i+D_i+E_i)+G,\; t\in E_\epsilon(\{t_k\}).
\end{equation}

We also define 
\begin{equation}\label{eq25}
\begin{aligned}
C_{i1}&=\int_{-\infty}^t|H(t+\omega,s+\omega)-H(t,s)|
\frac{|b_i(s+\omega)|}{1+\phi^{\alpha_i}(s+\omega-\tau_i(s+\omega))}ds,\\
C_{i2}&=\int_{-\infty}^tH(t,s)
\frac{|b_i(s+\omega)-b_i(s)|}{1+\phi^{\alpha_i}(s+\omega-\tau_i(s+\omega))}ds,\\
C_{i3}&=\int_{-\infty}^tH(t,s)|\phi(s+\omega-\tau_i(s+\omega))-\phi(s-\tau_i(s+\omega))|ds,\\
C_{i4}&=\int_{-\infty}^tH(t,s)|\phi(s-\tau_i(s+\omega))-\phi(s-\tau_i(s))|ds
\end{aligned}
\end{equation}
and $K_i=\sup_{\phi_L\leq x\leq \|\phi\|}\alpha_i x^{\alpha_i-1}$. 
It can be seen from \eqref{eq23} and \eqref{eq25} that
\begin{equation}\label{eq26}
C_i\leq C_{i1}+C_{i2}+b_{iM}K_i(C_{i3}+C_{i4}),\; i\in\underline{m}.
\end{equation}

By Lemma \ref{lem5} and Lemma \ref{lem6}, from \eqref{eq21}, \eqref{eq25} 
and the fact that $\int_{-\infty}^te^{-\frac{1}{2}a_L(t-s)}ds=2/a_L$, we have
\begin{equation}\label{eq27}
C_{i1}\leq \frac{2b_{iM}M}{a_L}+\sum_{t_k<t}2Ab_{iM}
\int_{t_k-\epsilon}^{t_k+\epsilon}e^{-a_L(t-s)}ds\leq b_{iM}
\bigg(\frac{2M}{a_L}+\frac{4Ae^{\frac{1}{2}a_L\eta}}{1-e^{-a_L\eta}}\bigg)\epsilon,
\end{equation}
and 
\begin{equation}\label{eq28}
\begin{aligned}
&C_{i2}\leq \int_{-\infty}^tA\epsilon e^{-a_L(t-s)}ds=\frac{A}{a_L}\epsilon,\\
&C_{i3}\leq \int_{-\infty}^t A\epsilon e^{-a_L(t-s)}ds+2A\|\phi\|\sum_{t_k<t}
\int_{\{s: |s-\tau_i(s+\omega)-t_k|<\epsilon, s\leq t\}}e^{-a_L(t-s)}ds.
\end{aligned}
\end{equation}

It should be noted that, by (A4), $t-\tau_i(t)$, $i\in\underline{m}$,
are strictly increasing functions, and thus, 
there exist the inverse functions $\tau^*_i(t)$ of $t-\tau_i(t)$.
For each $t\in\mathbb{R}$, denote $\bar t=t-\epsilon-\tau_i(t+\omega)$ then 
\begin{equation}\label{eq29}
t+\omega =\tau^*_i(\bar t+\omega +\epsilon).
\end{equation}

Let $\underline{\lambda}_i=\inf_{s\in\mathbb{R}}\dot\tau^*_i(s)$, 
$\overline{\lambda}_i= \sup_{s\in\mathbb{R}}\dot\tau^*_i(s), i\in\underline{m}$, then, by (A4), 
$0<\underline{\lambda}_i, \overline{\lambda}_i<\infty.$
Therefore, 
\begin{equation}\label{eq30}
\tau^*_i(\bar t+\omega+\epsilon)-\tau^*_i(t_{k}+\omega+\epsilon)
\geq\underline{\lambda}_i(\bar t-t_k),\ t_k<\bar t,
\end{equation}
and hence, from equations \eqref{eq28}-\eqref{eq30}, we have
\begin{equation}\label{eq31}
\begin{aligned}
C_{i3}&\leq \frac{A\epsilon}{a_L}+2A\|\phi\|\sum_{t_k<\bar t}
\int_{\tau^*_i(t_k+\omega-\epsilon)-\omega}^{\tau^*_i(t_k+\omega+\epsilon)-\omega}e^{-a_L(t-s)}ds\\
& \leq \frac{A\epsilon}{a_L}+2A\|\phi\|\sum_{t_k<\bar t}
e^{-a_L[\tau^*_i(\bar t+\omega+\epsilon)-\tau^*_i(t_k+\omega+\epsilon)]}
\left[\tau^*_i( t_k+\omega+\epsilon)-\tau^*_i(t_k+\omega-\epsilon)\right]\\
&\leq\frac{A\epsilon}{a_L}+4A\|\phi\|\overline{\lambda}_i\epsilon
\sum_{t_k<\bar t}e^{-a_L{\underline \lambda}_i(\bar t-t_k)}
\leq \bigg(\frac{1}{a_L}+\frac{4\overline{\lambda}_i\|\phi\|}{1-e^{-a_L{\underline\lambda}_i\eta}}\bigg)A\epsilon.
\end{aligned}
\end{equation}

By the same arguments used in deriving \eqref{eq31}, we obtain
\begin{equation}\label{eq32}
C_{i4}\leq A\epsilon\int_{-\infty}^te^{-a_L(t-s)}ds
+2A\|\phi\|\sum_{t_k<t}\int_{\tau^*_i(t_k-\epsilon)}^{\tau^*_i(t_k+\epsilon)}e^{-a_L(t-s)}ds
 \leq \bigg(\frac{1}{a_L}+\frac{4\overline{\lambda}_i\|\phi\|}{1-e^{-a_L{\underline\lambda}_i\eta}}\bigg)A\epsilon.
\end{equation}
Combining \eqref{eq26}-\eqref{eq28}, \eqref{eq31} and \eqref{eq32}, we readily obtain
\begin{equation}\label{eq33}
C_i\leq \left[\frac{A}{a_L}+\frac{2b_{iM}(M+AK_i)}{a_L}+Ab_{iM}
\bigg(\frac{4e^{\frac{1}{2}a_L\eta}}{1-e^{-a_L\eta}}
+\frac{8K_i\overline{\lambda}_i\|\phi\|}{1-e^{-a_L{\underline \lambda}_i\eta}}\bigg)\right]\epsilon.
\end{equation}

Next, let us set 
\begin{equation}\label{eq34}
\begin{aligned}
D_{i1}&=\int_{-\infty}^t|H(t+\omega,s+\omega)-H(t,s)|
\int_0^T\frac{v_i(r)}{1+\phi^{\beta_i}(s+\omega-r)} drds,\\
D_{i2}&=\int_{-\infty}^tH(t,s)|c_i(s+\omega)-c_i(s)|
\int_0^T\frac{v_i(r)}{1+\phi^{\beta_i}(s+\omega-r)} drds,\\
D_{i3}&=\int_{-\infty}^tH(t,s)\int_0^Tv_i(r)
\left|\frac{1}{1+\phi^{\beta_i}(s+\omega-r)}-\frac{1}{1+\phi^{\beta_i}(s-r)}\right|drds.
\end{aligned}
\end{equation}
It follows from \eqref{eq23} and \eqref{eq34} that
\begin{equation}\label{eq35}
D_i\leq c_{iM}D_{i1}+D_{i2}+c_{iM}D_{i3},\ i\in\underline{m}.
\end{equation}
By Lemmas \ref{lem5} and \ref{lem6}, from \eqref{eq21} we have 
\begin{equation}\label{eq36}
\begin{aligned}
D_{i1}&\leq \int_{-\infty}^tM\epsilon e^{-\frac{1}{2}a_L(t-s)}ds
+\sum_{t_k<t}2A\int_{t_k-\epsilon}^{t_k+\epsilon}e^{-a_L(t-s)}
\leq \bigg(\frac{2M}{a_L}+\frac{4Ae^{\frac{1}{2}a_L\eta}}{1-e^{-a_L\eta}}\bigg)\epsilon,\\
D_{i2}&\leq\int_{-\infty}^tA\epsilon e^{-a_L(t-s)}ds=\frac{A}{a_L}\epsilon,
\end{aligned}
\end{equation}
and 
\begin{equation}\label{eq37}
\begin{aligned}
D_{i3}&\leq\int_{-\infty}^t  Ae^{-a_L(t-s)}\int_0^TG_i v_i(r)|\phi(s+\omega-r)-\phi(s-r)|drds\\
&\leq \int_{-\infty}^tAG_i\epsilon e^{-a_L(t-s)}ds+2AG_i\|\phi\|\int_0^Tv_i(r)
\bigg(\sum_{t_k+r<t}\int_{t_k-r-\epsilon}^{t_k+r+\epsilon}e^{-a_L(t-s)}ds\bigg)dr\\
&\leq\frac{AG_i\epsilon}{a_L}+{4AG_i\|\phi\|\epsilon}\int_0^Tv_i(r)
\bigg(\sum_{t_k+r<t}e^{-a_L(t-t_k-r -\epsilon)}\bigg)dr\\
&\leq\bigg(\frac{1}{a_L}+\frac{4e^{\frac{1}{2}a_L\eta}\|\phi\|}{1-e^{-a_L\eta}}\bigg)AG_i\epsilon,
\end{aligned}
\end{equation}
where $G_i=\sup_{\phi_L\leq x\leq \|\phi\|}\beta_i x^{\beta_i-1}$. 
From \eqref{eq35}-\eqref{eq37}, we readily obtain
\begin{equation}\label{eq38}
D_i\leq\left[\frac{A+2Mc_{iM}+Ac_{iM}G_i}{a_L}
+\frac{4Ac_{iM}e^{\frac{1}{2}a_L\eta}(1+G_i\|\phi\|)}{1-e^{-a_L\eta}}\right]\epsilon.
\end{equation}

Now, we define
\begin{equation}\label{eq39}
\begin{aligned}
E_{i1}&=\int_{-\infty}^t|H(t+\omega,s+\omega)-H(t,s)|H_i(s+\omega,\phi(s+\omega-\sigma_i(s+\omega)))ds, \\
E_{i2}&=\int_{-\infty}^tH(t,s)|H_i(s+\omega,\phi(s+\omega-\sigma_i(s+\omega)))
-H_i(s+\omega,\phi(s-\sigma_i(s+\omega)))|ds, \\
E_{i3}&=\int_{-\infty}^tH(t,s)|H_i(s+\omega,\phi(s-\sigma_i(s+\omega)))
-H_i(s+\omega,\phi(s-\sigma_i(s)))|ds,  \\
E_{i4}&=\int_{-\infty}^tH(t,s)|H_i(s+\omega,\phi(s-\sigma_i(s)))- H_i(s,\phi(s-\sigma_i(s)))|ds,
\end{aligned}
\end{equation}
then, from \eqref{eq23} and \eqref{eq39}, we have 
\begin{equation}\label{eq40}
E_i\leq E_{i1}+E_{i2}+E_{i3}+E_{i4},\ i\in\underline{m}.
\end{equation}
Also using Lemma \ref{lem5} and Lemma \ref{lem6}, from \eqref{eq21} and   
the fact that $\int_{-\infty}^te^{-\frac{1}{2}a_L(t-s)}ds=2/a_L$, we obtain
\begin{equation}\label{eq41}
\begin{aligned}
E_{i1}&\leq \frac{2H_{iM}M\epsilon}{a_L}+\sum_{t_k<t}2AH_{iM}
\int_{t_k-\epsilon}^{t_k+\epsilon}e^{-a_L(t-s)}ds
\leq 2\left(\frac{M}{a_L}+\frac{2Ae^{\frac{1}{2}a_L\eta}}{1-e^{-a_L\eta}}\right)H_{iM}\epsilon,\\
E_{i4}&\leq \int_{-\infty}^tA\epsilon e^{-a_L(t-s)}ds \leq \frac{A}{a_L}\epsilon.
\end{aligned}
\end{equation}
Let $\underline{\xi}_i=\inf_{t\in \mathbb{R}}\dot \sigma^*_i(t),\overline{\xi}_i
=\sup_{t\in \mathbb{R}}\dot \sigma^*_i(t), i\in\underline{m}$. 
Similarly to \eqref{eq31} and \eqref{eq32}, we readily obtain
\begin{equation}\label{eq42}
\begin{aligned}
E_{i2}&\leq L_i\int_{-\infty}^tH(t,s)|\phi(s+\omega-\sigma_i(s+\omega))-\phi(s-\sigma_i(s+\omega))|ds\\
&\leq \left(\frac{1}{a_L}+\frac{4\overline{\xi}_i\|\phi\|}{1-e^{-a_L{\underline\xi}_i\eta}}\right) AL_i\epsilon,\\
 E_{i3}&\leq L_i\int_{-\infty}^tH(t,s)|\phi(s-\sigma_i(s+\omega))-\phi(s-\sigma_i(s))|ds\\
&\leq \left(\frac{1}{a_L}+\frac{4\overline{\xi}_i\|\phi\|}{1-e^{-a_L{\underline\xi}_i\eta}}\right) AL_i\epsilon.
\end{aligned}
\end{equation}
Inequalities \eqref{eq40}-\eqref{eq42} yield 
\begin{equation}\label{eq43}
E_i\leq\bigg(\frac{A(2L_i+1)+2H_{iM}M}{a_L}
+\frac{4Ae^{\frac{1}{2}a_L\eta}  H_{iM}}{1-e^{-a_L\eta}}
+\frac{8AL_i\overline{\xi}_i\|\phi\|}{1-e^{-a_L\underline{\xi}_i\eta}}\bigg)\epsilon.
\end{equation}

Let us set 
\begin{equation}\label{eq44}
G_1= \sum_{t_k<t}|H(t+\omega,t_{k+p})-H(t,t_k)|, \quad G_2= \sum_{t_k<t}| H(t,t_k)|
\end{equation}
then 
\begin{equation}\label{eq45}
G\leq\sum_{t_k<t}|H(t+\omega,t_{k+p})-H(t,t_k)||\delta_{k+p}|
+\sum_{t_k<t}| H(t,t_k)||\delta_{k+p}-\delta_k|
\leq \bar\delta G_1+\epsilon G_2.
\end{equation}
For each $t\geq t_k$, there exists a unique integer $l=l(t)$ such that $t_l<t\leq t_{l+1}$. 
By Lemma \ref{lem3} we have $t_{l+p}<t+\omega<t_{l+p+1}$. Thus 
\begin{equation}\label{eq46}
\begin{aligned}
 G_1&\leq\sum_{t_k<t}\bigg(\Gamma_M e^{-a_L(t-t_k-\epsilon)}
\left|\int_{t_{k+p}}^{t+\omega}a(s)ds-\int_{t_{k}}^{t}a(s)ds\right|\\
&\quad +e^{-a_L(t-t_k)}\left|\Gamma (k+p,l+p)-\Gamma (k,l)\right|\bigg)\\
&\leq \Gamma_M I_1 +I_2,
\end{aligned}
\end{equation}
where 
\begin{equation}\label{eq47}
\begin{aligned}
&I_1=\sum_{t_k<t} e^{-a_L(t-t_k-\epsilon)}
\left|\int_{t_{k+p}}^{t+\omega}a(s)ds-\int_{t_{k}}^{t}a(s)ds\right|,\\
&I_2= \sum_{t_k<t}e^{-a_L(t-t_k)}\left|\Gamma (k+p,l+p)-\Gamma (k,l)\right|.
\end{aligned}
\end{equation}
Note that $\epsilon <\eta/2, \ \dfrac12a_L(t-t_k)e^{-\frac{1}{2}a_L(t-t_k)}<1$ and 
$|t^p_k-\omega |<\epsilon_1<\epsilon$, from \eqref{eq21} and Lemma \ref{lem5}, we have 
\begin{equation}\label{eq48}
\begin{aligned}
I_1&\leq \sum_{t_k<t} e^{-a_L(t-t_k-\epsilon)}\bigg(\int_{t_{k}}^{t}|a(s+\omega)-a(s)|ds
+\bigg|\int_{t_{k+p}-\omega}^{t_k}a(s)ds\bigg|\bigg)\\
&\leq \epsilon\sum_{t_k<t} e^{-a_L(t-t_k-\epsilon)}(t-t_k+a_M)\\
&\leq\left(a_M\epsilon e^{\frac{1}{2}a_L\eta}
+\frac{2\epsilon}{a_L}e^{\frac{1}{2}a_L\eta}\right)
\sum_{t_k<t} e^{-\frac{1}{2}a_L(t-t_k)}\\
&\leq\frac{e^{\frac{1}{2}a_L\eta}}{1-e^{-\frac{1}{2}a_L\eta}}
\bigg(a_M+\frac{2 }{a_L}\bigg)\epsilon.
\end{aligned}
\end{equation}
Similarly, we obtain 
\begin{equation}\label{eq49}
\begin{aligned}
I_2&\leq \frac{\Gamma_M}{1+\gamma_L}
\sum_{t_k<t}e^{-a_L(t-t_k)}(l-k+1)\epsilon
\leq \frac{\Gamma_M}{1+\gamma_L}\sum_{t_k<t}e^{-a_L(t-t_k)}
\bigg(\frac{t-t_k}{\eta}+1\bigg)\epsilon\\
 &\leq\frac{\Gamma_M}{(1+\gamma_L)(1-e^{-\frac{1}{2}a_L\eta})}
\bigg(\frac{2}{a_L\eta }+1\bigg)\epsilon.
\end{aligned}
\end{equation}
It follows from \eqref{eq46}-\eqref{eq49} that
\begin{equation}\label{eq50}
G_1\leq\frac{\Gamma_M}{1-e^{-\frac{1}{2}a_L\eta}}
\left[e^{\frac{1}{2}a_L\eta}\left(a_M+\frac{2}{a_L}\right)
+\frac{1}{1+\gamma_L}\left(\frac{2}{a_L\eta}+1\right)\right]\epsilon.
\end{equation}
We also have
\begin{equation}\label{eq51}
G_2 \leq A\sum_{t_k<t}e^{-a_L(t-t_k)}\leq \frac{A}{1-e^{-a_L\eta}}.
\end{equation}
Therefore
\begin{equation}\label{eq52}
G\leq\frac{\epsilon\overline\delta\Gamma_M}{1-e^{-\frac{1}{2}a_L\eta}}
\left[e^{\frac{1}{2}a_L\eta}\bigg(a_M+\frac{2 }{a_L}\bigg)
+\frac{1}{1+\gamma_L}\bigg(\frac{2}{a_L\eta }+1\bigg)\right]
+\frac{A\epsilon}{1-e^{-\frac{1}{2}a_L\eta}}.
\end{equation}
We can see clearly from \eqref{eq24}, \eqref{eq33}, 
\eqref{eq38}, \eqref{eq43} and \eqref{eq52} that 
there exists a positive constant $\Lambda$ such that 
$|F(\phi (t+\omega)-F\phi (t)|\leq \Lambda \epsilon$, for all 
$t\in\mathbb{R}, |t-t_k|>\epsilon$, $k\in \mathbb{Z}$. 
This shows that $F\phi(t)$ is almost periodic. The proof is completed.
\end{proof} 

\begin{theorem}\label{thm3}  
Let Assumptions {\rm (A1)-(A7)} hold. If  $M_2$, defined in \eqref{eq15}, is positive and 
\begin{equation}\label{eq53}
\frac{A}{a_L}\sum_{i=1}^m\left(b_{iM}K^*_i+c_{iM}G^*_i+L_i\right)<1,
\end{equation}
where $K^*_i=\sup_{M_2\leq x\leq M_1}\alpha_ix^{\alpha_i-1},\ G^*_i
=\sup_{M_2\leq x\leq M_1}\beta_ix^{\beta_i-1}$, 
then equation \eqref{eq6} has a unique positive almost periodic solution.
\end{theorem}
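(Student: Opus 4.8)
The plan is to realize the desired solution as the unique fixed point of the operator $F$ from \eqref{eq14}, obtained by the Banach contraction mapping principle applied not on all of $\mathcal{D}_1$ but on the closed bounded subset
\[
\mathcal{D}^*=\bigl\{\phi\in\mathcal{D}_1:\ M_2\leq\phi(t)\leq M_1\ \text{for all }t\in\mathbb{R}\bigr\},
\]
equipped with the metric induced by $\|\cdot\|$. Restricting to $\mathcal{D}^*$ is essential: only when $\phi$ is pinched between $M_2>0$ and $M_1$ are the quantities $K_i^*$ and $G_i^*$ finite, and these are exactly what turns the left-hand side of \eqref{eq53} into a finite contraction constant.

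First I would verify that $\mathcal{D}^*$ is a nonempty complete metric space and that $F(\mathcal{D}^*)\subseteq\mathcal{D}^*$. Nonemptiness holds because $\mathcal{D}_1$ contains the constant functions and Lemma \ref{lem9} applied to any such function forces $M_2\leq M_1$, so every constant in $[M_2,M_1]$ lies in $\mathcal{D}^*$; completeness follows since a uniform limit of almost periodic $PLC(\mathbb{R},\mathbb{R})$-functions (in the sense of Definition \ref{def2}) is again almost periodic, while nonnegativity and the two-sided bounds $M_2\leq\phi(t)\leq M_1$ clearly pass to the limit. That $F$ maps $\mathcal{D}^*$ into itself is then immediate from Theorem \ref{thm2}, which gives almost periodicity of $F\phi$, and Lemma \ref{lem9}, which gives $M_2\leq F\phi(t)\leq M_1$ and in particular $F\phi\geq 0$ since $M_2>0$.

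The core step is the contraction estimate. Fix $\phi,\psi\in\mathcal{D}^*$; in $F\phi(t)-F\psi(t)$ the impulsive sum $\sum_{t_k<t}H(t,t_k)\delta_k$ does not depend on the argument and cancels, leaving only the integral term. For the terms $(1+\phi^{\alpha_i})^{-1}$ I would use $\bigl|(1+u^{\alpha_i})^{-1}-(1+v^{\alpha_i})^{-1}\bigr|\leq|u^{\alpha_i}-v^{\alpha_i}|\leq K_i^*|u-v|$ for $u,v\in[M_2,M_1]$ (mean value theorem together with the definition of $K_i^*$), and likewise $\bigl|(1+u^{\beta_i})^{-1}-(1+v^{\beta_i})^{-1}\bigr|\leq G_i^*|u-v|$, the latter combined with $\int_0^T v_i(r)\,dr=1$; the harvesting terms are controlled by the Lipschitz hypothesis (A3) with constant $L_i$. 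Combining these with the Cauchy-matrix bound $H(t,s)\leq Ae^{-a_L(t-s)}$ from Lemma \ref{lem6} and $\int_{-\infty}^t e^{-a_L(t-s)}\,ds=1/a_L$ yields
\[
\|F\phi-F\psi\|\leq\frac{A}{a_L}\sum_{i=1}^m\bigl(b_{iM}K_i^*+c_{iM}G_i^*+L_i\bigr)\|\phi-\psi\|,
\]
so by \eqref{eq53} the map $F$ is a contraction on $\mathcal{D}^*$.

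It then remains to apply the Banach contraction mapping principle, which gives a unique $\phi^*\in\mathcal{D}^*$ with $F\phi^*=\phi^*$; by the equivalence noted after \eqref{eq14}, $x^*=\phi^*$ is an almost periodic solution of \eqref{eq6}, and it is positive because $\phi^*(t)\geq M_2>0$. Uniqueness among all positive almost periodic solutions follows since any such solution is a fixed point of $F$ in $\mathcal{D}_1$ and hence, by Lemma \ref{lem9}, automatically satisfies $M_2\leq\phi\leq M_1$, i.e.\ $\phi\in\mathcal{D}^*$, where the fixed point is unique. I expect the genuinely delicate point to be not the estimates above (which are routine once Lemmas \ref{lem5}, \ref{lem6}, \ref{lem9} and Theorem \ref{thm2} are available) but the soft facts they rest on: the completeness of $\mathcal{D}^*$, i.e.\ the closedness of the class of almost periodic $PLC$-functions under uniform convergence, and the well-definedness of $F$ on $\mathcal{D}^*$ (convergence of the improper integral and of the impulsive series), which is implicit in the proof of Lemma \ref{lem9} and should be made explicit.
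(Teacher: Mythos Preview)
Your proposal is correct and follows essentially the same approach as the paper: define $\mathcal{D}_2=\{\phi\in\mathcal{D}_1:\ M_2\leq\phi\leq M_1\}$, use Lemma~\ref{lem9} and Theorem~\ref{thm2} to see $F(\mathcal{D}_2)\subset\mathcal{D}_2$ (indeed $F(\mathcal{D}_1)\subset\mathcal{D}_2$), obtain exactly the same Lipschitz estimate via the mean value theorem and Lemma~\ref{lem6} to show $F$ is a contraction on $\mathcal{D}_2$, and conclude uniqueness in $\mathcal{D}_1$ from $F(\mathcal{D}_1)\subset\mathcal{D}_2$. The only difference is that you spell out completeness of $\mathcal{D}^*$ and well-definedness of $F$, which the paper takes for granted.
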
 

\begin{proof} We define 
$\mathcal{D}_2=\big\{\phi \in\mathcal{D}_1: M_2
\leq \phi(t)\leq M_1,\ t\in \mathbb{R}\big\}$. 
It is worth noting that, from Lemma \ref{lem9}, 
Theorem \ref{thm2} and the assumption $M_2>0$, we have 
$F(\mathcal{D}_2)\subset {\mathcal D}_2$. 
For any $\phi, \psi\in\mathcal{D}_2$, applying Lemma \ref{lem6} we obtain
\begin{align*}
|F\phi (t)-F\psi (t)|\leq\; &\int_{-\infty}^tH(t,s)\sum_{i=1}^m\bigg\{L_i|\phi(s-\sigma_i(s))-\psi(s-\sigma_i(s))|\\
&+c_{iM}\int_0^Tv_i(r)\left|\frac{1}{1+\phi^{\beta_i}(s-r)}-\frac{1}{1+\psi^{\beta_i}(s-r)}\right|dr \\
&+ b_{iM}\left|\frac{1}{1+\phi^{\alpha_i}(s-\tau_i(s))}-\frac{1}{1+\psi^{\alpha_i}(s-\tau_i(s))}\right|\bigg\}ds\\
\leq \; & \frac{A}{a_L}\sum_{i=1}^m\left(b_{iM}K^*_i+c_{iM}G^*_i+L_i\right)\|\phi -\psi\|.
\end{align*}
Therefore
\begin{equation*}
\|F\phi -F\psi \|\leq \frac{A}{a_L}\sum_{i=1}^m\left(b_{iM}K^*_i+c_{iM}G^*_i+L_i\right)\|\phi-\psi\|
\end{equation*}
which yields $F$ is a contraction mapping on ${\mathcal D}_2$ by condition \eqref{eq53}. 
Then $F$ has a unique fixed point in $\mathcal{D}_2$, namely $\phi_0$. It should be noted that, 
$F(\mathcal{D}_1)\subset\mathcal{D}_2$, and hence, 
$F$ also has a unique fixed point $\phi_0$ in $\mathcal{D}_1$. 
This shows that \eqref{eq6} has a unique positive almost periodic solution 
$x^*(t)=\phi_0(t)$. The proof is completed.
\end{proof}

\begin{theorem}\label{thm4} 
Let Assumptions {\rm(A1)-(A7)} hold. If $M_2>0, \sigma\leq\eta$ and 
\begin{equation}\label{eq54}
\frac{1}{a_L}\max\left\{A, (\gamma _L +1)^{-1}\right\}
\sum_{i=1}^m\left(b_{iM}K^*_i+c_{iM}G^*_i+L_i\right)<1,
\end{equation}
then \eqref{eq6} has a unique positive almost periodic solution $x^*(t)$. 
Moreover, every solution $x(t)=x(t,\alpha,\xi)$ of \eqref{eq6} converges exponentially to 
$x^*(t)$ as $t\to \infty$.
\end{theorem}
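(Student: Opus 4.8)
The proof splits into two parts. The \emph{existence and uniqueness} of the positive almost periodic solution is immediate from Theorem~\ref{thm3}: since $A=\max\{\Gamma_M,1\}\ge1$ and $(\gamma_L+1)^{-1}>0$, we have $\max\{A,(\gamma_L+1)^{-1}\}\ge A$, so \eqref{eq54} implies \eqref{eq53}; together with the hypothesis $M_2>0$ this is precisely the hypothesis of Theorem~\ref{thm3}. Hence \eqref{eq6} has a unique positive almost periodic solution $x^*$, and, being the fixed point of $F$ in $\{\phi:\,M_2\le\phi(t)\le M_1\}$, it satisfies $M_2\le x^*(t)\le M_1$ for all $t\in\mathbb{R}$. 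For the \emph{attractivity} I would apply the impulsive delay inequality of Lemma~\ref{lem8} to $y(t):=|x(t)-x^*(t)|$, where $x(t)=x(t;\alpha,\xi)$ is an arbitrary solution of \eqref{eq6}--\eqref{eq9}; note $y\in PLC([\alpha-\sigma,\infty),\mathbb{R}^+)$.

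The first step --- and, I expect, the main obstacle --- is to localize $x(t)$. Using the variation-of-constants representation of the solution on $[\alpha,\infty)$ (whose transient part, coming from $H(t,\alpha)x(\alpha)$ and from the contribution of the initial segment, is $O(e^{-a_L(t-\alpha)})$) and estimating exactly as in the proof of Lemma~\ref{lem9} with the aid of Lemma~\ref{lem5} and Lemma~\ref{lem6}, one first obtains $\limsup_{t\to\infty}x(t)\le M_1$ and then, inserting this into the lower estimate of the nonlinearity, $\liminf_{t\to\infty}x(t)\ge M_2$. Because \eqref{eq54} is a strict inequality and $K_i^*,G_i^*$ vary continuously with the interval over which the suprema are taken, one may fix $\epsilon\in(0,M_2)$ small enough and $T_0\ge\alpha$ large enough that $M_2-\epsilon\le x(t)\le M_1+\epsilon$ for all $t\ge T_0-\sigma$ and
\[
\frac{1}{a_L}\max\{A,(\gamma_L+1)^{-1}\}\,S<1,
\]
where $S:=\sum_{i=1}^{m}(b_{iM}K_i^\epsilon+c_{iM}G_i^\epsilon+L_i)$ and $K_i^\epsilon,G_i^\epsilon$ denote the suprema of $\alpha_ix^{\alpha_i-1}$ and $\beta_ix^{\beta_i-1}$ over $[M_2-\epsilon,M_1+\epsilon]$ (so $K_i^\epsilon\to K_i^*$, $G_i^\epsilon\to G_i^*$ as $\epsilon\to0$).

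With this in hand I would verify the three hypotheses of Lemma~\ref{lem8} for $y$ on $[T_0,\infty)$ with $R=a_L$, $\tau=\sigma$ and the above $S$. For (d2): for $t\ge T_0$, $t\ne t_k$, subtract the equation for $x^*$ from that for $x$; using $a(t)\ge a_L$, the mean value theorem applied to $u\mapsto(1+u^{\alpha_i})^{-1}$ and $u\mapsto(1+u^{\beta_i})^{-1}$ (whose derivatives are dominated on $[M_2-\epsilon,M_1+\epsilon]$ by $\alpha_ix^{\alpha_i-1}$ and $\beta_ix^{\beta_i-1}$), the Lipschitz bound in (A3), $\int_0^Tv_i(r)\,dr=1$ and $\tau_i(t),\sigma_i(t)\le\sigma$, one gets $D^+y(t)\le-a_Ly(t)+S\,\overline{y}(t)$ with $\overline{y}(t)=\sup_{t-\sigma\le s\le t}y(s)$. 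For (d1): since $\gamma_L>-1$ we have $1+\gamma_k>0$, hence $y(t_k^+)=(1+\gamma_k)y(t_k^-)$, that is $\Delta y(t_k)=\gamma_ky(t_k^-)$; moreover $\max_{t_k\ge T_0}\{(\gamma_k+1)^{-1},1\}\le\max\{(\gamma_L+1)^{-1},A\}<a_L/S=R/S$. For (d3): $\tau=\sigma\le\eta\le t_k-t_{k-1}$ by the hypothesis $\sigma\le\eta$ and (A7).

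Lemma~\ref{lem8} then yields
\[
y(t)\le\overline{y}(T_0)\Big(\prod_{T_0<t_k\le t}(\gamma_k+1)\Big)e^{-\lambda(t-T_0)},\qquad t\ge T_0,
\]
with $0<\lambda\le a_L-S\max_{t_k\ge T_0}\{(\gamma_k+1)^{-1},1\}e^{\lambda\sigma}$; such a $\lambda$ exists because the right-hand side is continuous and strictly decreasing in $\lambda$ and is positive at $\lambda=0$ (this is exactly (d1)). Finally, the product equals $\Gamma(q,p)\le\Gamma_M$ for the impulses $t_q,\dots,t_p$ lying in $(T_0,t]$ (and equals $1$ if there are none), hence is $\le A$, and $\overline{y}(T_0)<\infty$; therefore $|x(t)-x^*(t)|\le A\,\overline{y}(T_0)\,e^{-\lambda(t-T_0)}\to0$ as $t\to\infty$, which is the claimed exponential convergence. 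The only genuinely technical point is the localization step; once $x(t)$ is trapped near $[M_2,M_1]$, checking (d1)--(d3) and invoking Lemma~\ref{lem8} is routine.
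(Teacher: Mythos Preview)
Your core strategy coincides with the paper's: set $y(t)=|x(t)-x^*(t)|$, verify $\Delta y(t_k)=\gamma_k y(t_k^-)$, derive $D^+y(t)\le -a_L y(t)+S\,\overline y(t)$, and invoke Lemma~\ref{lem8} with $R=a_L$ and $\tau=\sigma$. The paper's proof, however, applies Lemma~\ref{lem8} directly from $t=\alpha$ with the constants $K_i^*,G_i^*$ and no preliminary localization of $x(t)$: it simply writes down the differential inequality with those constants and concludes $V(t)\le\Gamma_M\overline V(\alpha)e^{-\lambda(t-\alpha)}$ in two lines.

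Your additional localization step---first trapping $x(t)$ in $[M_2-\epsilon,M_1+\epsilon]$ for $t\ge T_0-\sigma$ via the variation-of-constants formula, then working with perturbed constants $K_i^\epsilon,G_i^\epsilon$---is not in the paper but is a sensible precaution: it makes rigorous the use of the mean value theorem for $u\mapsto(1+u^{\alpha_i})^{-1}$ with slope bounded by (essentially) $K_i^*$, a bound the paper tacitly applies to the arbitrary solution $x(t)$ without checking that $x(t)$ lies in $[M_2,M_1]$. The cost of your route is a longer argument and a decay estimate that begins only at $T_0$ rather than at $\alpha$; the benefit is that the appearance of $K_i^*,G_i^*$ in the differential inequality is actually justified. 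Apart from this, your verification of (d1)--(d3) and the final product bound $\prod(\gamma_k+1)\le\Gamma_M$ match the paper's proof.
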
 

\begin{proof} By Theorem \ref{thm3}, \eqref{eq6} 
has a unique positive almost periodic solution $x^*(t)$. 
Let $x(t)=x(t,\alpha,\xi)$ be a solution of \eqref{eq6} and \eqref{eq9}.  
We define $V(t)=|x(t)-x^*(t)|$ then 
\begin{align*}
D^+V(t) & \leq -a_L|x(t)-x^*(t)|+\sum_{i=1}^m
\bigg[b_{iM}K^*_i|x(t-\tau_i(t))-  x^*(t-\tau_i(t))| \\
&\quad+c_{iM}G^*_i\int_0^Tv_i(s)|x(t-s)-x^*(t-s)|ds+L_i|x(t-\sigma_i(t))-  x^*(t-\sigma_i(t))|\bigg]\\
 &\leq -a_LV(t)+ \sum_{i=1}^m\left(b_{iM}K^*_i+c_{iM}G^*_i +L_i\right)\overline{V}(t), \; t\ne t_k,\ t\geq \alpha,\\
\Delta V(t_k)&=\gamma_k V(t^-_k), \; t_k\geq\alpha, k\in \mathbb{Z},
\end{align*}
where $\overline{V}(t)=\sup_{t-\sigma\leq s \leq t}V(s)$.
By Lemma \ref{lem8}, there exists a positive constant $\lambda$ such that 
\begin{equation*}
V(t)\leq \overline{V}(\alpha)\prod_{\alpha <t_k\leq t}(\gamma_k+1)e^{-\lambda (t-\alpha)}
\leq\Gamma_M\overline{V}(\alpha) e^{-\lambda (t-\alpha)},\; t\geq \alpha.
\end{equation*}
This shows that $x(t)$ converges exponentially to $x^*(t)$ as $t\to \infty$. 
The proof is completed.
\end{proof}

The existence and exponential stability of positive almost periodic solution of \eqref{eq4} 
is presented in the following corollary 
as an application of our obtained results with $m=1$.

\begin{corollary}\label{cr1} Under the Assumptions {\rm (A1), (A2)} (with $c(t)=0$) and {\rm (A5)-(A7)}, 
if $M_2>0, \tau\leq\eta$ and 
\begin{equation}\label{eq55}
\frac{1}{a_L}\max\left\{A, (\gamma _L +1)^{-1}\right\}b_MK^*<1,
\end{equation}
where $K^*=\sup_{M_2\leq x\leq M_1}\alpha x^{\alpha-1}$, then \eqref{eq4} has a 
unique positive almost periodic solution which is exponentially stable. 
\end{corollary}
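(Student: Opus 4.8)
The plan is to obtain Corollary~\ref{cr1} as a direct specialization of Theorem~\ref{thm4} to the case $m=1$, $c_1(t)\equiv 0$, and with the harvesting term absent (so that $H_1\equiv 0$, $L_1=0$). First I would observe that equation~\eqref{eq4} is exactly equation~\eqref{eq6} under these choices, together with the identification $\alpha_1=n$, $\tau_1(t)\equiv\tau$, and the understanding that the single constant delay $\tau$ makes Assumption~(A4) trivially satisfied ($\dot\tau_1\equiv 0$, $1-\dot\tau_1\equiv 1>0$) and Assumption~(A3) vacuous; as noted in Remark~\ref{rm1}, for model~\eqref{eq4} the hypotheses (A3) and (A4) may be dropped. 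Thus the standing assumptions (A1), (A2) (with $c\equiv 0$), (A5), (A6), (A7) in the statement of the corollary are precisely the restriction of (A1)--(A7) to this setting.

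Next I would check that the hypotheses of Theorem~\ref{thm4} reduce, term by term, to those of the corollary. In \eqref{eq15} the sum over $i$ collapses to a single term, the $c$-term vanishes, and $H_{iL}=H_{iM}=0$, so $M_1$ and $M_2$ become the one-term expressions appearing implicitly in the corollary; the requirement $M_2>0$ is retained verbatim. The smallness condition~\eqref{eq54} with $m=1$, $c_{1M}=0$, $L_1=0$, $\alpha_1=n$ becomes
\begin{equation*}
\frac{1}{a_L}\max\left\{A,(\gamma_L+1)^{-1}\right\}b_M K^* < 1,
\end{equation*}
which is exactly \eqref{eq55} with $K^*=\sup_{M_2\le x\le M_1}\alpha x^{\alpha-1}$ (here $\alpha=n$). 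Finally the condition $\sigma\le\eta$ of Theorem~\ref{thm4}, where $\sigma=\max\{T,\tau_{1M},\sigma_{1M}\}$, reduces to $\tau\le\eta$ once the harvesting delay $\sigma_1$ and the distributed-delay length $T$ are removed.

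With these identifications in place, Theorem~\ref{thm4} yields directly that \eqref{eq4} has a unique positive almost periodic solution $x^*(t)$ and that every solution $x(t)=x(t,\alpha,\xi)$ of \eqref{eq4}--\eqref{eq9} satisfies $V(t)=|x(t)-x^*(t)|\le \Gamma_M\overline V(\alpha)e^{-\lambda(t-\alpha)}$ for some $\lambda>0$, i.e.\ $x(t)\to x^*(t)$ exponentially; since this holds for arbitrary admissible initial data $\xi$, the solution $x^*$ is (globally) exponentially stable, which is the conclusion of the corollary. The only point requiring a word of care — and the step I would flag as the main (though minor) obstacle — is the legitimacy of discarding assumptions (A3) and (A4): one must confirm that nowhere in the chain Lemma~\ref{lem9} $\to$ Theorem~\ref{thm2} $\to$ Theorem~\ref{thm3} $\to$ Theorem~\ref{thm4} are (A3)--(A4) used in a way that is not automatically satisfied by the constant-delay, no-harvesting case; inspecting the proofs, (A4) enters only through the inverse functions $\tau_i^*$ and the constants $\underline\lambda_i,\overline\lambda_i$, which for $\tau_1\equiv\tau$ are simply $\tau_1^*(t)=t+\tau$ with $\underline\lambda_1=\overline\lambda_1=1$, and (A3) enters only through the Lipschitz bound on $H_i$, which is vacuous when $H_1\equiv 0$. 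Hence the reduction is valid and the corollary follows.
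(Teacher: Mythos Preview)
Your proposal is correct and follows exactly the paper's intended approach: the corollary is stated without proof as a direct specialization of Theorem~\ref{thm4} to the case $m=1$, $c_1\equiv 0$, $H_1\equiv 0$, with constant delay $\tau_1\equiv\tau$. Your additional verification that (A3) and (A4) become vacuous in this setting (cf.\ Remark~\ref{rm1}) and that $\sigma$ reduces to $\tau$ is precisely the routine checking the paper leaves to the reader.
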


\section{An illustrative example}

In this section we give a numerical example to illustrate the effectiveness 
of our conditions. For illustrating purpose, let us consider the following equation
\begin{equation}\label{eq56}
\begin{split}
\dot x(t)=\; &-a(t)x(t)+\frac{b(t)}{1+x^2(t-\tau(t))}+\int_0^1\frac{c(t)ds}{1+x^2(t-s)}\\
&-d(t)\frac{|x(t-\sigma(t))|}{10+|x(t-\sigma(t))|},\quad t\neq k,\\
\Delta x(k)=\;&\gamma_kx(k-0)+\delta_k, \quad k\in \mathbb{Z},
\end{split}
\end{equation}
where
\begin{align*}
&a(t)=5+|\sin(t\sqrt 2)|,\; b(t)=\frac1{10}(1+|\sin(t\sqrt 3)|),\; 
c(t)=\frac1{10}(1+|\cos(t\sqrt 3)|),\\
&d(t)=\frac1{20}\sin^2(t\sqrt3),\; \tau(t)=\sin^2(\frac{\sqrt3}{2}t),\; 
\sigma(t)=\cos^2(\frac{\sqrt3}{2}t),\\
&\gamma_{2m}=-\frac12,\ \gamma_{2m+1}=1,\ \delta_{2m}=1,\ 
\delta_{2m+1}= \frac12,\ m\in\mathbb{Z}.
\end{align*}

It should be noted that, the functions $a(t), b(t), c(t), \tau(t)$ and $\sigma(t)$ 
are almost periodic in the sense of Bohr, 
$H(t,x)=d(t)\dfrac{|x|}{10+|x|}$ is almost periodic in $t\in\mathbb{R}$ 
uniformly in $x\in\mathbb{R}_+$, 
$|H(t,x)-H(t,y)|\leq \frac12|x-y|$. Therefore, Assumptions (A1)-(A5) and (A7) are satisfied. 
On the other hand, $\Gamma(q,p)=\prod_{i=q}^p(1+\gamma_i)\in\{\frac12, 1, 2\}$ 
for any $p,q\in\mathbb{Z}, p\geq q$. 
Thus, $\Gamma_M=2, \Gamma_L=\dfrac12$ and Assumption (A6) is satisfied. 
Taking some computations we obtain
\begin{align*}
&a_L=5,\ a_M=6,\ b_L=c_L=0.1,\ b_M=c_M=0.2, L\leq 0.5, H_M=\frac1{20}, H_L=0,\\
&A=2,\ B=0.5,\ M_1=2.1736, \ M_2=0.0027,\ K^*=G^*=2M_1
\end{align*}
and $\dfrac{A}{a_L}(b_MK^*+c_MG^*+L)\leq 0.8956$. 
By Theorem \ref{thm3}, equation \eqref{eq56} 
has a unique positive almost periodic solution $x^*(t)$. 
Furthermore, it can be seen that 
$\gamma_L=-\frac12$, and hence, 
$\dfrac1{a_L}\max\{A,(\gamma_L+1)^{-1}\}(b_MK^*+c_MG^*+L)\leq 0.8956$. 
By Theorem \ref{thm4}, every solution $x(t,\alpha,\xi)$ of \eqref{eq56} 
converges exponentially to $x^*(t)$ as $t$ tends to 
infinity. As presented in figure 1, state trajectories of \eqref{eq56} 
with different initial conditions converge 
to the unique positive almost periodic solution of \eqref{eq56}.

  \begin{figure}[!ht]\label{fig1}
  \begin{center}
  \includegraphics[width=0.6\textwidth]{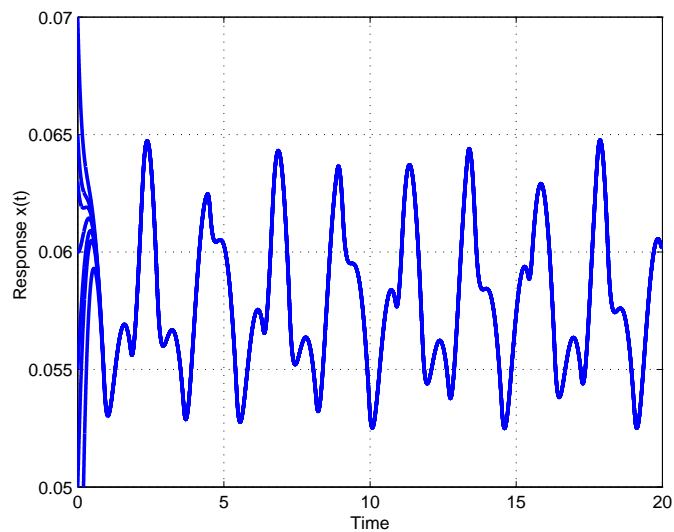}
\caption{State trajectories of \eqref{eq56} converge to the unique 
positive almost periodic solution}
  \end{center}
  \end{figure}

\section{Conclusion}
This paper has dealt with the existence and exponential attractivity 
of a unique positive almost periodic solution for a 
generalized model of hematopoiesis with delays and impulses. 
Using the contraction mapping principle and 
a novel type of impulsive delay inequality, new sufficient conditions have 
been derived ensuring that all solutions of the model 
converge exponentially to the unique positive almost periodic solution.

\end{document}